\documentclass[10pt,twocolumn,twoside]{IEEEtran}

\usepackage[margin=0.75in]{geometry}
\usepackage{amsmath,amssymb,amsthm}
\usepackage{psfrag}
\usepackage{cite}
\usepackage{graphicx}
\usepackage{graphics}
\usepackage{epstopdf}
\usepackage{float}
\usepackage{bbm}
\usepackage{framed}
\usepackage{textcomp}
\usepackage{subfigure}
\usepackage{enumerate}
\usepackage{accents}
\restylefloat{figure}
\usepackage{color,xspace}
\usepackage{tikz}
\usepackage{upgreek}
\usetikzlibrary{automata,positioning}

\usepackage{pstool}
\usepackage{url}

\parindent = 0ex
\parskip = .5ex

\newcommand\munderbar[1]{%
  \underaccent{\bar}{#1}}

\newcommand{\real}{\mathbb{R}}
\newcommand{\realnonnegative}{\mathbb{R}_{\geq 0}}
\newcommand{\realpositive}{\mathbb{R}_{> 0}}
\newcommand{\complex}{\mathbb{C}}
\newcommand{\setdef}[2]{\{#1 \; | \; #2\}}

\newcommand{\virA}{\mathcal{A}}
\newcommand{\virB}{\mathcal{B}}
\newcommand{\A}{A} 
\newcommand{\Ain}{\A \text{in}}
\newcommand{\Bin}{\B \text{in}}
\newcommand{\B}{B} 
\newcommand{\bAstar}{\beta^{\virA \star}}
\newcommand{\G}{G} 
\newcommand{\GA}{G_A} 
\newcommand{\GB}{G_B} 

\newcommand{\N}{\mathcal{N}}

\newcommand{\V}{V} 
\newcommand{\VA}{\V^\A}

\newcommand{\VB}{\V^\B}

\newcommand{\E}{E} 
\newcommand{\EA}{\E^\A}
\newcommand{\EB}{\E^\B}

\usetikzlibrary{automata,positioning,fit,shapes.geometric}

\newcommand{\bAijstar}{{\beta}^{\virA \star}_{ij}}
\newcommand{\dAistar}{{\delta}^{\virA \star}_i}

\newcommand{\dAi}{\delta^{\virA}_i}
\newcommand{\dA}{\vec{\delta}^{\virA}}
\newcommand{\dB}{\vec{\delta}^{\virB}}
\newcommand{\dBi}{\delta^{\virB}_i}

\newcommand{\dmax}{\overline{\delta}}
\newcommand{\dmaxAi}{\hat{\delta}^\virA_i}

\newcommand{\hatbAij}{\hat{\beta}^\virA_{ij}}
\newcommand{\hatbAji}{\hat{\beta}^\virA_{ji}}
\newcommand{\hatbA}{\hat{\beta}^\virA}

\newcommand{\bAij}{\beta^{\virA}_{ij}}
\newcommand{\bAijlow}{\munderbar{\beta}^{\virA}_{ij}}
\newcommand{\bAijup}{\bar{\beta}^{\virA}_{ij}}
\newcommand{\dAilow}{\munderbar{\delta}^{\virA}_{i}}
\newcommand{\dAiup}{\bar{\delta}^{\virA}_{i}}
\newcommand{\tbAij}{\tilde{\beta}^{\virA}_{ij}}
\newcommand{\bAji}{\beta^{\virA}_{ji}}
\newcommand{\bA}{\beta^{\virA}}

\newcommand{\bBji}{\beta^{\virB}_{ji}}
\newcommand{\dPhiAi}{\dot{\Phi}^\virA_i}

\newcommand{\dPhiBi}{\dot{\Phi}^\virB_i}
\newcommand{\PhiA}{\vec{\Phi}^\virA}
\newcommand{\PhiAiss}{\bar{\Phi}^\virA_i}

\newcommand{\Phiss}{\bar{\Phi}}

\newcommand{\PhiB}{\vec{\Phi}^\virB}
\newcommand{\PhiBiss}{\bar{\Phi}^\virB_i}
\newcommand{\PhiBjss}{\bar{\Phi}^\virB_j}
\newcommand{\PhiAi}{\Phi^\virA_i}
\newcommand{\PhiAj}{\Phi^\virA_j}
\newcommand{\PhiBj}{\Phi^\virB_j}
\newcommand{\PhiBi}{\Phi^\virB_i}

\newcommand{\Posy}{\mathcal{P}}

\newcommand{\PsiA}{\vec{\Psi}^\virA}
\newcommand{\PsiB}{\vec{\Psi}^\virB}

\newcommand{\dotPsiA}{\dot{\Psi}^\virA}
\newcommand{\dotPsiB}{\dot{\Psi}^\virB}

\newcommand{\PhiBss}{\bar{\Phi}^\virB}
\newcommand{\PhiAss}{\bar{\Phi}^\virA}
\newcommand{\bB}{\beta^\virB}

\newcommand{\XiA}{X_i^{\virA}}
\newcommand{\XiB}{X_i^{\virB}}
\newcommand{\XiS}{X_i^{S}}

\newcommand{\XjA}{X_j^{\virA}}
\newcommand{\XjB}{X_j^{\virB}}

\newcommand{\IA}{I^{\virA}}

\newcommand{\IB}{I^{\virB}}

\newcommand{\budget}{\mathfrak{C}}

\newcommand{\jini}[1]{j \in \N^{#1}_i}
\newcommand{\goesto}{\rightarrow}

\newcommand{\lbr}{\left \{}
\newcommand{\rbr}{\right \}}
\newcommand{\diag}{\operatorname{diag}}

\newtheorem{lem}{\textbf{Lemma}}
\newtheorem{theorem}{\textbf{Theorem}}
\newtheorem{prop}{\textbf{Proposition}}
\newtheorem{cor}{\textbf{Corollary}}

\newcommand{\oprocendsymbol}{\hbox{$\bullet$}}
\newcommand{\oprocend}{\relax\ifmmode\else\unskip\hfill\fi\oprocendsymbol}

\pgfdeclarelayer{bg}
\pgfdeclarelayer{bbg}
\pgfsetlayers{bbg,bg,main}

\newtheorem{problem}{\textbf{Problem}}{}
\newtheorem{remark}{\textbf{Remark}}{}

\begin{document}
\title{Optimal resource allocation for competitive spreading processes on bilayer networks}
\author{Nicholas J. Watkins, Cameron Nowzari,  Victor M. Preciado, and George J. Pappas\thanks{The authors are with the Department of Electrical and Systems Engineering, University of Pennsylvania, Pennsylvania, PA 19104, USA, {\tt\small \{nwatk,cnowzari,preciado,pappasg\}@upenn.edu}}}
      
\maketitle
\begin{abstract}
This paper studies the $S I_1 S I_2 S$ spreading model of two competing behaviors over a bilayer network.  We address the problem of determining resource allocation strategies which design a spreading network so as to ensure the extinction of a selected process.  Our discussion begins by extending the $S I_1 S I_2 S$ model to edge-dependent infection and node-dependent recovery parameters with generalized graph topologies, which builds upon prior work that studies the homogeneous case.  We then find conditions under which the  mean-field approximation of a chosen epidemic process stabilizes to extinction exponentially quickly. Leveraging this result, 
we formulate and solve an optimal resource allocation problem in which we minimize the expenditure necessary to force a chosen epidemic process to become extinct as quickly as possible.
In the case that the budget is not sufficient to ensure extinction of the desired process,
we instead minimize a useful heuristic.  We explore the efficacy of our methods by comparing simulations of the stochastic process to the mean-field model, and find that the mean-field methods developed work well for the optimal cost networks designed, but suffer from inaccuracy in other situations.
\end{abstract}

\section{Introduction} \label{sec:Intro}
Modeling, analysis, and control of spreading processes in complex networks has recently garnered significant attention from the research community.  The potential applications for such methods are diverse: the spread of biological epidemics, social behaviors, and cybersecurity threats can all be formalized within this framework.  Prior work has focused primarily on the case of single-layer spreading networks, however it is clear that such an abstraction is limited in modeling capacity.  
In principle, many real world networks transmit phenomena through markedly different channels, which motivates the study of multi-layer models such as the one addressed here.

This paper studies a multi-layer, heterogeneous compartmental epidemic model, in which the spread of competing beliefs and behaviors through social interactions can be modeled.  We direct our attention to a set of problems focused on a single theme - that of controlling a spreading process so as to quickly eliminate a chosen epidemic while allowing the possibility that the other survives indefinitely.  This is a natural choice of equilibrium concept for several socially relevant problems.  For example, we may use this model to study the effects of political strategies on the opinions of the populace, predict the ramifications of gossip in professional networks, and understand the influence of marketing strategies on consumer behavior.





\paragraph*{Literature review}
Many well-known models of spreading processes in networks are developed for the case of a single contagion spreading over a single network layer; we refer the reader to \cite{Hethcote_Math09,Keeling_Networks05,nowzari2015analysis} for an overview. 
Recent efforts have been made in extending this body of work to account for  the possibility of competitive and\slash or coexistent processes on single-layer networks.  Particular examples include investigations into the effects of multiple pathogens in a single-layer `Susceptible-Infected-Removed' ($SIR$) model \cite{Newman_Thresh05,Karrer_Competing11,Newman_Interacting13}, a study of an extension to the $SIR$ model ($SICR$) for assessing the effects of competition and cooperation between pathogens spreading on a single network \cite{Shrestha_StatInfer11}, and the development of a model for the spread of competing ideas using the `Susceptible-Infected-Susceptible' ($SIS$) model on scale-free networks \cite{Wang_DynamicsComp11}.

A more recent trend is the investigation into systems with multiple pathogens \emph{and} multiple spreading layers. An overview of this research area can be found in \cite{Salehi_DiffProc14}. Particular examples of interest include an investigation into the effects of pathogen interaction on overlay networks with $SIR$ dynamics \cite{Funk_Interacting10}, the development of a model in which disease awareness and infection spread on separate layers of $SIS$ dynamics \cite{Granell_DynInter13,Granell_Competing14}, the development of a model ($SI_1 S I_2S$) that generalizes the classic $SIS$ model to a competitive multilayer framework \cite{Wei_CompetingMemes13}, and work to find conditions under which processes in the $SI_1 S I_2S$ model can coexist \cite{Sahneh_CompSpread14}.



We concern ourselves with finding resource allocations which design a network to control the system at optimal cost.  Similar problems have been studied for controlling the mean-field approximation for the single layer $SIS$ model in \cite{Preciado_OptRecAlloc14}, and a non-competitive multilayer model in \cite{Chen_CoInfect14}.  The work we present here is the first to consider an allocation problem which leverages inter-process competition, which we incorporate by studying a variant of the $S I_1 S I_2 S$ process.


\paragraph*{Statement of contributions}
Our primary contribution is in developing resource allocation method which designs a network wherein the mean-field approximation of the $S I_1 S I_2 S$ process presented in \cite{Wei_CompetingMemes13} and \cite{Sahneh_CompSpread14} is controlled to a desired equilibrium.  To accomplish this, we introduce a more general, heterogeneous version of the model with arbitrary spreading topologies, so as to enable us to capture the effects of asymmetric influence among the agents, which we then leverage in the design of networks which exponentially eliminate one epidemic while allowing the possibility that the other survives in an endemic state. We believe this equilibrium concept is useful in applications of various competitive spreading problems, such as a marketing firm wanting to influence their customer base to purchase a certain product over others.  Our technical contributions evolve from addressing this task, and address several control theoretic facets of the mean-field $S I_1 S I_2 S$ model left previously unexplored.

More specifically, we first determine necessary and sufficient conditions for exponentially stabilizing the desired equilibrium of the mean-field model.  Then, we formulate an 
optimal resource allocation problem where we may pay specified costs in order to assign particular values to model parameters.  We develop tractable methods for computing a minimal-cost set of resource allocations which attains the desired equilibrium, and for mitigating the prevalence of the unwanted epidemic process when the available budget is not sufficient for realizing
the desired equilibrium.  Finally,
we explore the efficacy of the mean-field control policies developed against the stochastic process behavior through extensive Monte Carlo simulations.  With respect to the preliminary work presented in \cite{watkins2015optimal}, this paper extends the results pertaining to the effects of competition, provides proofs of our main results, and adds significant simulations comparing the mean-field model to the stochastic $S I_1 S I_2 S$ process.

\subsection{Notation and Mathematical Review}
\paragraph*{Vectors and Matrices}
Let $\real$, $\realnonnegative$, and $\realpositive$ denote the set of real, nonnegative real numbers, and positive real numbers respectively. 
We use the notation $\vec{x} \in \real^n$ to denote an $n$-dimensional column vector, and $\vec{x}^T$ to denote its transpose, both with components $x_i \in \real$.  We use $|S|$ to denote the cardinality of a finite set.

We say a matrix $A$ is irreducible if no similarity transformation exists which places $A$ into block upper-triangular form.  We denote by $\diag(\vec{a})$ a matrix with entries $\diag(\vec{a})_{ii} = a_i$ for all $i$ and $0$ elsewhere.  We will make repeated use of the Perron-Frobenius Theorem, which gives:
\begin{prop}[P-F Theorem] \label{prop:pf}
	Let $A$ be a non-negative, irreducible matrix.  Then, there exists a vector $\vec{u}$ such that $u_i > 0$ for all $i$, and $A u = \lambda^* u$, where $\lambda^*$ is the leading eigenvalue of $A$.
\end{prop}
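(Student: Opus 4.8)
The plan is to prove this through the Collatz--Wielandt variational characterization of the spectral radius, which yields the strictly positive eigenvector and the maximality of $\lambda^*$ simultaneously. For a nonzero $\vec{x} \in \realnonnegative^n$, define $r(\vec{x}) = \min_{i\,:\,x_i > 0} (A\vec{x})_i / x_i$, equivalently the largest $\rho \geq 0$ with $A\vec{x} \geq \rho\vec{x}$ entrywise, and set $\lambda^* := \sup\setdef{r(\vec{x})}{\vec{x} \in \realnonnegative^n,\ \vec{x} \neq \vec{0}}$. Because $r$ is invariant under positive scaling, this supremum is unchanged upon restricting to the compact simplex $\Delta = \setdef{\vec{x} \in \realnonnegative^n}{\norm{\vec{x}} = 1}$.

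First I would show the supremum is attained. The issue is that $r$ need not be continuous on $\Delta$ --- it jumps as components of $\vec{x}$ reach zero --- so I use irreducibility to smooth it. Since $A$ is nonnegative and irreducible, $M := (I+A)^{n-1}$ is entrywise strictly positive, hence $M\vec{x} > \vec{0}$ for all $\vec{x} \in \Delta$; moreover $M$ is a polynomial in $A$ with nonnegative coefficients, so it commutes with $A$ and preserves vector inequalities. Thus $A\vec{x} \geq r(\vec{x})\vec{x}$ gives $A(M\vec{x}) = M(A\vec{x}) \geq r(\vec{x})\,M\vec{x}$, i.e. $r(M\vec{x}) \geq r(\vec{x})$, so $\lambda^*$ equals the supremum of $r$ over $K := \setdef{M\vec{x}/\norm{M\vec{x}}}{\vec{x} \in \Delta}$, a compact subset of $\realpositive^n$ on which $r$ is continuous. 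Hence the supremum is attained at some $\vec{u} \in K$, which in particular is strictly positive.

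Next I would show $A\vec{u} = \lambda^*\vec{u}$. If not, then $\vec{z} := A\vec{u} - \lambda^*\vec{u}$ is a nonzero vector in $\realnonnegative^n$, so $M\vec{z} = A(M\vec{u}) - \lambda^*(M\vec{u}) > \vec{0}$, which says $r(M\vec{u}) > \lambda^*$ and contradicts the definition of $\lambda^*$; hence $(\lambda^*,\vec{u})$ is an eigenpair with $\vec{u}$ strictly positive, proving the first claim. To see that $\lambda^*$ is the leading eigenvalue, take any eigenvalue $\mu \in \complex$ with eigenvector $\vec{v}$; applying componentwise moduli and the triangle inequality, $|\mu|\,|v_i| = |\sum_j A_{ij} v_j| \leq \sum_j A_{ij}|v_j| = (A|\vec{v}|)_i$, so $A|\vec{v}| \geq |\mu|\,|\vec{v}|$ with $|\vec{v}|$ a nonzero vector in $\realnonnegative^n$; therefore $|\mu| \leq r(|\vec{v}|) \leq \lambda^*$. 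Thus every eigenvalue of $A$ has modulus at most $\lambda^*$, so $\lambda^*$ is indeed the leading eigenvalue.

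The step I expect to be the main obstacle --- and the only one where irreducibility is essential --- is showing that the Collatz--Wielandt supremum is attained at a strictly positive vector: for reducible $A$ (say, block upper-triangular) $r$ genuinely fails to be upper semicontinuous on $\Delta$ and the maximizer can lie on the boundary, so no positive eigenvector need exist. The smoothing device $M = (I+A)^{n-1} > \vec{0}$ simultaneously restores attainment and forces $\vec{u} > \vec{0}$; proving $M > \vec{0}$ is the routine fact that the $(i,j)$ entry of $A^k$ is a sum over length-$k$ walks from $i$ to $j$ of products of nonnegative edge weights, and irreducibility guarantees such a walk of length at most $n-1$. An alternative is to apply Brouwer's fixed point theorem to the continuous self-map $\vec{x} \mapsto A\vec{x}/\norm{A\vec{x}}$ of $\Delta$ --- well-defined because irreducibility forbids $A\vec{x} = \vec{0}$ for $\vec{x} \in \Delta$ --- extracting the eigenpair directly and then recovering positivity and identifying $\lambda^*$ as leading by analogous estimates; the Collatz--Wielandt route is preferable here since it needs no fixed point theorem and packages the maximality claim into the construction.
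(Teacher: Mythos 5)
Your proof is correct and complete. Note, though, that the paper does not prove Proposition~\ref{prop:pf} at all: it is stated as classical background (the Perron--Frobenius theorem) and simply invoked later in the proofs of Theorems~\ref{thm:dyn_stable} and~\ref{thm:min_rate}, so there is no in-paper argument to compare against. Your Collatz--Wielandt route is the standard self-contained derivation, and you handle the two genuinely delicate points correctly: (i) the function $r(\vec{x}) = \min_{i : x_i > 0} (A\vec{x})_i / x_i$ fails to be continuous where coordinates vanish, and you restore attainment of the supremum by pushing the simplex into the open positive orthant via the strictly positive matrix $M = (I+A)^{n-1}$, which is exactly where irreducibility enters; and (ii) the strict inequality $r(M\vec{u}) > \lambda^*$ obtained from $M\vec{z} > \vec{0}$ when $\vec{z} = A\vec{u} - \lambda^*\vec{u} \neq \vec{0}$ correctly upgrades the subinvariance $A\vec{u} \geq \lambda^*\vec{u}$ to an eigenvalue equation. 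The closing estimate $|\mu| \leq r(|\vec{v}|) \leq \lambda^*$ for an arbitrary eigenvalue $\mu$ is also the right way to identify $\lambda^*$ as the leading eigenvalue, which is the property the paper actually relies on when it rewrites the Hurwitz condition on $J_{11}$ as the eigenvector inequality constraints in \eqref{prog:thm2} and \eqref{prog:thm3}. One cosmetic remark: in the definition of $\hat{v}_r$ in your smoothing discussion nothing is at stake, but in the displayed supremum over $\Delta$ you should make explicit that $K \subseteq \Delta$ so that the supremum over $K$ does not exceed the one over $\Delta$; you assert only the reverse inequality $r(M\vec{x}) \geq r(\vec{x})$, and both directions are needed to conclude the two suprema coincide. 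This is a one-line addition, not a gap in substance.
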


\paragraph*{Graph Theory}
A \emph{directed graph} (digraph) is given by a triplet $\G = \lbr \V, \E, \A \rbr$ in which $\V$ is the set of vertices, $\E \subseteq \V \times \V$ the ordered set of edges, and $\A \in \{0,1\}^{|V| \times |V|}$ is the adjacency matrix.  In such a graph, $a_{ij} = 1$ if and only if there exists an edge $(i,j) \in \E$ connecting node $i$ to node $j$. We define
the set of in-neighbors of node $i$ given the adjacency matrix $\A$ as
$\N^{\A \text{in}}_i = \setdef{j \in \V}{a_{ji} = 1}$. 

A path $p$ is given by an ordered set of vertices $p = \lbr v_1, v_2, \dots, v_m \rbr$ such that for each pair of consecutive vertices $v_k$, $v_{k+1}$, $\left(v_k, v_{k+1} \right)$ is an edge in $E$.  We say that some path $p$ connects node $v_i$ and $v_j$ if both $v_i$ and $v_j$ are listed as nodes in the path.  We say a digraph is strongly connected if there exists some path $p$ connecting node $v_i$ to node $v_j$ for all $v_i, v_j \in \V$. The adjacency matrix of a strongly-connected digraph is irreducible.

A \emph{bilayer graph} is a collection of two graphs, $\G = \lbr \GA, \GB \rbr$ which satisfy the following property: the vertex set $\V$ and edge set $\E$ of $\G$ are such that $\V = \VA \,\cup \, \VB$, and $\E = \EA \, \cup \, \EB$, where $\VA$ and $\VB$ are the vertex sets of $\GA$ and $\GB$, respectively, and $\EA$ and $\EB$ are the edge sets of $\GA$ and $\GB$, respectively.  

\paragraph*{Geometric Programming}
Geometric programs form a class of quasiconvex optimization problems which have \emph{posynomial} objective functions and inequality constraints, and \emph{monomial} equality constraints.
A function $f : \realpositive^n \rightarrow \real$ is called a \emph{monomial} if it can be written in the form $f(\vec{x}) = c \, x_1^{r_1} x_2^{r_2}\dots x_n^{r_n}$, where $c > 0$ is used to denote a leading constant, the $r_i$ terms represent constant powers to which the arguments are raised, and the $x_i$ terms represent $f$'s arguments.  A function is said to be a \emph{posynomial} if it can be written as a sum of monomials.

Geometric programs can be made into convex optimization problems by performing a logarithmic change of variables and a logarithmic transformation of the objective and constraint functions.  For further details on geometric programs and their solution, we refer the reader to \cite{Boyd_CvxOpt04,Boyd_GPTut07}.

\section{Model and Problem Statement} \label{sec:Problem}
We begin our technical discussion by extending the $SI_1SI_2S$ model proposed in \cite{Wei_CompetingMemes13} and analyzed further in \cite{Sahneh_CompSpread14}.  Our primary contribution in extending this model is allowing the processes to be influenced by heterogeneous parameters, and allowing for the graph layers to be strongly connected digraphs with arbitrary node sets.  This contrasts with the work in \cite{Sahneh_CompSpread14}, which assumes homogeneous spreading parameters and undirected layers with identical node sets.  The extension allows the possibility of modeling asymmetric influence and nodal immunity, and is required to formulate the resource allocation problem.

We consider the spread of epidemics $\virA$ and $\virB$ over a bilayer graph $G = \{\GA,\GB\}$, where $\virA$ spreads over $\GA = \{\VA,\EA, A\}$, $\virB$ spreads over $\GB = \{\VB,\EB,B\}$, and $|V| = n$. At any time $t$, we assume that each node can belong to one of three \emph{compartments}: $\IA$ if the the node is infected by epidemic $\virA$, $\IB$ if the node is infected by epidemic $\virB$ and $S$ if the node is infected by neither.  We let $\XiA$, $\XiB$, and $\XiS$ 
denote indicator functions corresponding to the compartments $\IA$, $\IB$ and $S$, respectively,  where we define $\XiA(t) = 1$ if node $i$ is in compartment $\IA$ at time $t$ and $\XiA(t) = 0$ otherwise.  We define $\XiB$ and $\XiS$ similarly.

We model the spread of $\virA$ and $\virB$ as a Markovian contact process in which a node $i$ in compartment $S$ transitions to $\IA$ whenever it is a contacted by a node $j$ in compartment $\IA$,
with similar considerations holding for transitions from $S$ to $\IB$.  We assume all of the contact processes are stochastically independent, and occur at rates $\bAji$ for the transitions from $S$ to $\IA$ and $\bBji$ for the transitions from $S$ to $\IB$, which we refer to as \emph{spreading} rates.  From this description, it then follows that the process which transitions node $i$ from compartment $S$ to compartment $\IA$ is a Poisson process with rate $Y_i^{\mathcal{A}}(t) = \sum_{\jini{\Ain}} \bAji \XjA(t),$ and the process which transitions node $i$ from compartment $S$ to compartment $\IB$ is a Poisson process with rate $Y_i^{\mathcal{B}}(t) = \sum_{\jini{\Bin}} \bBji \XjB(t).$  The processes which transition a node $i$ from $\IA$ to $S$ and from $\IB$ to $S$ are Poisson processes with rates $\dAi$ and $\dBi$, which we refer to as \emph{healing} rates.   An illustration of the heterogeneous $S I_1 S I_2 S$ process is given in Figure \ref{fig:compartmental}.  

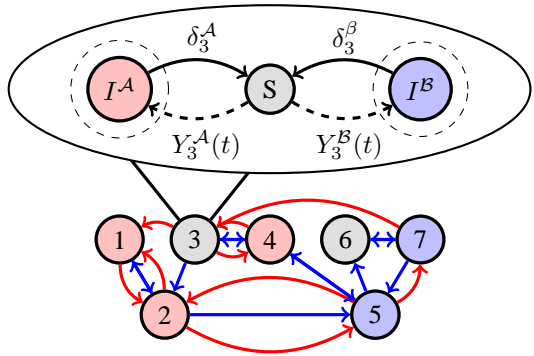
\begin{figure}[h!]
	\centering
	\begin{tikzpicture}
	\node[circle,very thick,draw=black] (IA) at (-2,0) {$I^\virA$};
	\node[circle,very thick,draw=black] (IB) at (2,0) {$I^\virB$};
	\node[circle,very thick,draw=black] (S) at (0,0) {S};
	\node[draw=black,dashed,fit=(IA) ,inner sep=0.1ex,ellipse] (I1circ) {};
	\node[draw=black,dashed,fit=(IB) ,inner sep=0.1ex,ellipse] (I2circ) {};
		
	\path [->,very thick,dashed] (S) edge [bend left] node[xshift=0.1cm,yshift=-0.35cm] {$Y_3^{\mathcal{A}}(t)$} (IA);
	\path[->,very thick] (IA) edge [bend left] node[xshift=0.05cm, yshift=0.3cm] {$\delta^\mathcal{A}_3$} (S);
	\path [->,very thick,dashed] (S) edge [bend right] node[xshift=0.1cm,yshift=-0.35cm] {$Y_3^{\mathcal{B}}(t)$} (IB);
	\path [->,very thick] (IB) edge [bend right] node[xshift=0.05cm, yshift=0.3cm] 
	{$\delta^{\beta}_3$} (S);
	
	\node[circle,very thick,draw=black] (1) at (-2,-2) {1};
	\node[circle,very thick,draw=black] (2) at (-1.4,-3) {2};
	\node[circle,very thick,draw=black] (3) at (2,-2) {7};
	\node[circle,very thick,draw=black] (4) at (1.4,-3) {5};
	\node[circle,very thick,draw=black] (5) at (1,-2) {6};
	\node[circle,very thick,draw=black] (6) at (-1,-2) {3};
	\node[circle,very thick,draw=black] (7) at (0,-2) {4};
	
	\path [->,very thick,color=red] (1) edge [bend right] (2);
	\path [->,very thick,color=red] (2) edge [bend right] (1);
	\path [->,very thick,color=blue] (2) edge (1);
	\path [->,very thick,color=blue] (1) edge (2);
	
	\path [->,very thick,color=red] (2) edge [bend right] (4);
	\path [->,very thick,color=red] (4) edge [bend right] (2);
	\path [->,very thick,color=red] (4) edge [bend right] (3);
	\path [->,very thick,color=blue] (3) edge (4);
	\path [->,very thick,color=blue] (4) edge (5);
	
	\path [->,very thick,color=red] (3) edge [bend right] (6);
	\path [->,very thick,color=red] (6) edge [bend right] (7);
	\path [->,very thick,color=red] (7) edge [bend right] (6);
	\path [->,very thick,color=red] (6) edge [bend right] (1);
	\path [->,very thick,color=blue] (3) edge (5);
	\path [->,very thick,color=blue] (5) edge (3);
	\path [->,very thick,color=blue] (4) edge (7);
	\path [->,very thick,color=blue] (7) edge (4);
	\path [->,very thick,color=blue] (7) edge (6);
	\path [->,very thick,color=blue] (6) edge (7);
	\path [->,very thick,color=blue] (6) edge (2);
	\path [->,very thick,color=blue] (2) edge (4);
	
	\node[] (anch1) at (0,.65) {};
	\node[] (anch2) at (0,-.65) {};
	\node[] (ell1) at (0,-.75) {};
	\node[] (ell2) at (-2,-.75) {};
	
	\begin{pgfonlayer}{bg}
	\node[draw=black,fit={(IB) (S) (IA) (anch1) (anch2)},thick ,inner sep=0.1ex,ellipse,fill=white] (I2circ) {};
	\node[circle,very thick,draw=black,opacity=0.25,fill=red] (IA) at (-2,0) {$I^\virA$};
	\node[circle,very thick,draw=black,opacity=0.25,fill=blue] (IB) at (2,0) {$I^\virB$};
	\node[circle,very thick,draw=black,opacity=0.25,fill=gray] (S) at (0,0) {S};
	
	\node[circle,very thick,draw=black,opacity=0.25,fill=red] (1) at (-2,-2) {1};
	\node[circle,very thick,draw=black,opacity=0.25,fill=red] (2) at (-1.4,-3) {2};
	\node[circle,very thick,draw=black,opacity=0.25,fill=blue] (3) at (2,-2) {7};
	\node[circle,very thick,draw=black,opacity=0.25,fill=blue] (4) at (1.4,-3) {5};
	\node[circle,very thick,draw=black,opacity=0.25,fill=gray] (5) at (1,-2) {6};
	\node[circle,very thick,draw=black,opacity=0.25,fill=gray] (6) at (-1,-2) {3};
	\node[circle,very thick,draw=black,opacity=0.25,fill=red] (7) at (0,-2) {4};
	\end{pgfonlayer}
	
	\begin{pgfonlayer}{bbg}
	\path [very thick,] (6) edge (ell1);
	\path [very thick] (6) edge (ell2);
	\end{pgfonlayer}
	\end{tikzpicture}
	\caption{A diagram of the $S I_1 S I_2 S$ process, with the spreading graph for $\A$ given by red edges, and the spreading graph for $\B$ given by blue edges.  The transition process for node $3$ is explicitly illustrated, where we note that node $3$ is a member of both spreading graphs, and so may have transitions to both $\IA$ and $\IB$.} \label{fig:compartmental}
\end{figure}


For a general instance of the $S I_1 S I_2 S$ process, studying the exact dynamics would require the enumeration of a Markov process with $O(3^n)$ states, 
arising from the need to explicitly account for all permissible combinations of compartmental memberships allowed by the instance of the problem.  There are at least two methods of dealing with this complexity: (i) restricting considerations to simple graph topologies, and (ii) approximating the dynamics by a lower-dimensional system.  As our goal is to design resource allocations on graphs with arbitrary graph structures, we consider here a mean-field approximation of the process, which reduces the dimension of the system's state space to $O(2n)$.

To clearly demonstrate how we arrive at the mean-field dynamics and give insight as to what effects the enacted approximations make, we first consider the exact equations of the process dynamics:
\begin{equation} \label{eq:exp_dynamics}
\begin{aligned}
	&\frac{d\E[\XiA]}{d t} = \E\left[(1 - \XiA - \XiB)\sum_{\jini{\Ain}} \bAji \XjA - \dAi \XiA \right] \\
	&\frac{d \E[\XiB]}{d t} = \E \left[(1 - \XiA - \XiB)\sum_{\jini{\Bin}} \bBji \XjB - \dBi \XiB \right]\\
\end{aligned}
\end{equation}
where we have used the substitution $\XiS = (1 - \XiA - \XiB)$ in order to reduce dimension.  Note that the equations described by the system \eqref{eq:exp_dynamics} are not closed: they contain terms of the form $\E[\XiA\XjA]$ and $\E[\XiB\XjA]$, which cannot be represented in terms of the dynamics of $\E[\XiA]$ and $\E[\XiB]$ without incurring error.  However, without a closed set of equations we cannot perform analysis, and so we make the approximations $\E[\XiA\XjA] \approx \PhiAi \PhiAj$ and $\E[\XiB\XjA] \approx \PhiBi \PhiAj$, where we have introduced the symbols $\PhiAi$ and $\PhiBi$ to denote the mean-field states approximating the probability that node $i$ is in $\IA$, and the probability that node $i$ is in $\IB$, respectively.
This substitution allows us to arrive at a mean-field approximation of $S I_1 S I_2 S$ in the style of \cite{Sahneh_GEMF13}:
\begin{align}
	&\dPhiAi = (1-\PhiAi-\PhiBi)\sum_{\jini{\Ain}} \bAji \PhiAj - \dAi \PhiAi , \label{eq:dynA}\\
	&\dPhiBi = (1-\PhiAi-\PhiBi)\sum_{\jini{\Bin}} \bBji \PhiBj - \dBi \PhiBi . \label{eq:dynB}
\end{align}
We will more thoroughly examine the interrelation of the mean-field model and the stochastic process in Section \ref{sec:Sim}.  However, the majority of our work will be guided by seeking answers to the following questions with respect to the mean-field model:
\begin{enumerate}[(a)]
	\item Extinction: what conditions are sufficient to extinct a chosen process quickly?
	\item Optimal Extinction: can we compute an optimal allocation of resources to attain a desired extinction quickly?
	\item Fixed Budget Mitigation: given a fixed budget, can we limit the prevalence a desired process effectively?
\end{enumerate}

We believe answers to these questions are
of interest to the community of researchers currently engaged in the study of competitive epidemic spreading processes.  As a particular example of a future application, we may consider a situation in which a firm would like to quell negative word-of-mouth advertising on its network of customers in the most expedient and cost effective manner possible.  We may represent this within the framework of our model as a problem of finding conditions under which an unwanted process is driven out of existence as quickly and efficiently as possible. 
Our work shows that computing an optimal-cost network to realize this goal is feasible in the mean-field regime, and provides a step forward from the earlier works considering single-layer spreading processes.


\section{Extinction Conditions} \label{sec:extinct}
This section addresses the first of our stated problems, i.e. finding conditions under which the unwanted epidemic extincts, or more concretely:
\begin{problem}[Extinction] \label{prob:stabilization}
	For some specified $S I_1 S I_2 S$ spreading process on a bilayer graph $G$, determine conditions for the parameters of the subgraph $\GA$ under which a chosen behavior $\virA$ extincts quickly, while allowing the possibility the behavior $\virB$ survives indefinitely.
\end{problem}

In particular, we are concerned with stabilizing a mean-field equilibrium $\Phiss = [(\PhiAss)^T, (\PhiBss)^T]^T$
where we have that $\PhiAiss$ and $\PhiBiss$ are the steady states of $\PhiAi$ and $\PhiBi$, $\PhiAiss = 0$ for all $i$, and the values of $\PhiBiss$ are given by the solutions of the system:
\begin{align}
	\frac{\PhiBiss}{(1-\PhiBiss)} = \frac{1}{\dBi} \sum_{\jini{\Bin}} \bBji \PhiBjss, \label{eq:equilB}
\end{align}
which may be computed numerically by methods similar to those used in \cite{VanMieghem_Virus09} for the $SIS$ steady-state equations, and is unique due to the uniqueness of the $SIS$ endemic equilibrium \cite{khanafer2014stability}.  With the ability to claim knowledge of the values $\{\PhiBiss|_{\PhiAiss = 0} \}_{i \in V}$, we may now construct a result to Problem \ref{prob:stabilization}.  In fact, we find necessary and sufficient conditions for the desired equilibrium to be exponentially stable:
\begin{theorem}[Mean-field Exponential Stability] \label{thm:stab}
For any $S I_1 S I_2 S$ spreading process on a strongly connected bilayer graph $G$ with mean field dynamics given
by~\eqref{eq:dynA} and~\eqref{eq:dynB}, the equilibrium $\Phiss = \left[(\PhiAss)^T, (\PhiBss)^T \right]^T$ with $\PhiAiss = 0$ for all $i$ and $\PhiBss$ given by the solutions of \eqref{eq:equilB} is (locally) exponentially stable if and only if
\begin{align*}
J_{11} = \diag \left(1 - \PhiBss\right) (\bA)^T  - \diag \left( \dA \right)
\end{align*}
is Hurwitz, where $\dA$ is the vector of $\virA$'s recovery rates, and $\bA$ is the matrix of $\virA$'s spreading rates, which we assume to inherit $A$'s sparsity pattern.
\end{theorem}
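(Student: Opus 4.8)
The plan is to apply Lyapunov's indirect method: confirm that $\Phiss$ is an equilibrium of~\eqref{eq:dynA}--\eqref{eq:dynB}, compute the Jacobian $J$ of the vector field at $\Phiss$, exploit its structure, and characterize when $J$ is Hurwitz. First I would check that $\Phiss$ is an equilibrium --- substituting $\PhiAiss = 0$ into~\eqref{eq:dynA} makes every term vanish, while~\eqref{eq:equilB} is precisely the condition $\dPhiBi = 0$ when $\PhiAiss = 0$. Next, ordering the state as $(\PhiA, \PhiB)$, I would differentiate~\eqref{eq:dynA}--\eqref{eq:dynB}. The key observation is that the coupling block $J_{12} = \partial \dPhiAi/\partial \PhiBj$ vanishes at $\Phiss$: in $\dPhiAi$ the dependence on $\PhiB$ enters only through the factor $(1 - \PhiAi - \PhiBi)$ multiplying $\sum_{\jini{\Ain}}\bAji\PhiAj$, and this sum is zero at the equilibrium. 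Hence $J$ is block lower triangular,
\begin{align*}
J = \begin{pmatrix} J_{11} & 0 \\ J_{21} & J_{22} \end{pmatrix},
\end{align*}
with $J_{11} = \diag(1 - \PhiBss)(\bA)^T - \diag(\dA)$ by direct computation, and $J_{22}$ equal to the Jacobian of the heterogeneous directed single-layer $SIS$ mean-field model at the equilibrium $\PhiBss$ of~\eqref{eq:equilB}. By block triangularity $\sigma(J) = \sigma(J_{11}) \cup \sigma(J_{22})$, so $J$ is Hurwitz if and only if both $J_{11}$ and $J_{22}$ are.

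The remaining task is to show $J_{22}$ is always Hurwitz, so that the condition collapses to ``$J_{11}$ Hurwitz.'' One may simply invoke the known local exponential stability of the $SIS$ endemic equilibrium on strongly connected digraphs~\cite{khanafer2014stability,VanMieghem_Virus09}. A self-contained alternative is a Perron--Frobenius argument: strong connectivity of $\GB$ makes $J_{22}$ an irreducible Metzler matrix and forces $\PhiBss \succ 0$ with each $\PhiBiss \in (0,1)$; substituting~\eqref{eq:equilB} into $J_{22}\PhiBss$ gives $(J_{22}\PhiBss)_i = -\dBi(\PhiBiss)^2/(1 - \PhiBiss) < 0$ for every $i$, and an irreducible Metzler matrix that maps some strictly positive vector to a strictly negative one has negative spectral abscissa, hence is Hurwitz. (If the only solution of~\eqref{eq:equilB} is $\PhiBss = 0$, then $J_{22} = (\bB)^T - \diag(\dB)$, which is Hurwitz exactly because this is the sub-threshold regime.)

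I would then close with Lyapunov's indirect method in both directions: if $J$ is Hurwitz, $\Phiss$ is locally exponentially stable; conversely, since exponential stability of a $C^1$ nonlinear system forces its linearization to be exponentially stable, if $J$ --- equivalently $J_{11}$, given that $J_{22}$ is Hurwitz --- is not Hurwitz, then $\Phiss$ is not locally exponentially stable. Together this yields the claimed equivalence.

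The step I expect to be the main obstacle is establishing Hurwitzness of $J_{22}$ cleanly: pinning down the $SIS$-stability input in the heterogeneous \emph{directed}-graph setting (many classical statements assume undirected contact), and, for a self-contained proof, justifying $\PhiBss \succ 0$ via Proposition~\ref{prop:pf} together with the Metzler/Perron--Frobenius lemma used to conclude Hurwitzness from $J_{22}\PhiBss \prec 0$. The Jacobian computation and the vanishing of $J_{12}$ are routine once the structure $\PhiAiss = 0$ is exploited.
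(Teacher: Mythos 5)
Your proposal is correct and follows essentially the same route as the paper's proof: linearize at $\Phiss$, observe the block lower-triangular structure so that $\sigma(J)=\sigma(J_{11})\cup\sigma(J_{22})$, invoke Lyapunov's indirect method in both directions, and dispose of $J_{22}$ by the known local exponential stability of the single-layer $SIS$ equilibrium from \cite{khanafer2014stability}. Your self-contained Perron--Frobenius/Metzler argument for $J_{22}$ (using $(J_{22}\PhiBss)_i=-\dBi(\PhiBiss)^2/(1-\PhiBiss)<0$) is a sound bonus that the paper does not include, but the main line of reasoning is the same.
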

\vspace{-0.3cm}
\begin{proof}
We begin by computing the linearization of the mean-field dynamics given by \eqref{eq:dynA} and \eqref{eq:dynB} about $\Phiss$, which we can show to be:
\begin{equation}\label{eq:linear}
	\left[\begin{matrix}
		\dotPsiA \\ 
		\dotPsiB
	\end{matrix} \right] =
	\left(\begin{matrix} 
 		J_{11} & 0 \\
		J_{21} & J_{22}
	\end{matrix} \right)
	\left[\begin{matrix}
		\PsiA \\
		\PsiB
	\end{matrix} \right] = 
	J \left[\begin{matrix}
			\PsiA \\
			\PsiB
		\end{matrix} \right] ,
\end{equation}
where
\begin{align*}
	J_{11} =& \diag \left(1-\PhiBss \right) (\bA)^T  - \diag(\dA) , \\
	J_{21} =& -\diag\left((\bB)^T \, \PhiBss\right) , \\
	J_{22} =& \diag  \left(1-\PhiBss \right) (\bB)^T  \\
	&- \diag((\bB)^T \, \PhiBss + \dB),
\end{align*}
with $\dB$ and $\bB$ defined analogously to $\dA$ and $\bA$.

We note also that the constituent matrices are constant, hence they are componentwise bounded and Lipschitz, and thereby allow the application of a well-known result from the theory of nonlinear systems:

\begin{prop} [Exp. Stability \cite{Khalil_Nonlinear02}] \label{prop:hurwitz}
	Let $x_0$ be an equilibrium point of the nonlinear system $x = f \left( x \right)$, where $f : D \rightarrow \real^n$ is continuously differentiable and the Jacobian matrix is bounded and Lipschitz on D.  Let 
	$$M = \dfrac{\partial f}{\partial x} \Big\vert_{x = x_0} $$
	Then, $x_0$ is an exponentially stable equilibrium point for the nonlinear system if and only if it is an exponentially stable equilibrium point of the linear system $\dot{x} = M x$. 
\end{prop}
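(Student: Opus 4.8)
The plan is to reduce the exponential-stability question to a purely spectral one and then exploit the triangular structure of the Jacobian. First I would compute the linearization of \eqref{eq:dynA}--\eqref{eq:dynB} about $\Phiss$, obtaining the block lower-triangular matrix $J$ of \eqref{eq:linear} with the stated blocks $J_{11}$, $J_{21}$, $J_{22}$. Because the constituent matrices are constant, hence bounded and Lipschitz, Proposition \ref{prop:hurwitz} applies and tells us that $\Phiss$ is locally exponentially stable if and only if $J$ is Hurwitz. Since $J$ is block lower-triangular, $\det(J-\lambda I)=\det(J_{11}-\lambda I)\det(J_{22}-\lambda I)$, so the spectrum of $J$ is the union of the spectra of $J_{11}$ and $J_{22}$; thus $J$ is Hurwitz if and only if both diagonal blocks are. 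The whole theorem therefore reduces to showing that $J_{22}$ is Hurwitz unconditionally, for then $J$ is Hurwitz exactly when $J_{11}$ is.

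The core step is to prove $J_{22}=\diag(1-\PhiBss)(\bB)^T-\diag((\bB)^T\PhiBss+\dB)$ is Hurwitz. I would first note that $J_{22}$ is Metzler: using $\PhiBiss\in[0,1)$, its off-diagonal entries coincide with those of the non-negative matrix $\diag(1-\PhiBss)(\bB)^T$, while the subtracted diagonal term only decreases diagonal entries. For a Metzler matrix the spectral abscissa is a real eigenvalue (apply Proposition \ref{prop:pf} to $J_{22}^T+cI$ for large $c$), and the standard consequence is that $J_{22}$ is Hurwitz precisely when there exists a strictly positive vector $\vec{v}$ with $J_{22}\vec{v}\curlyless 0$ componentwise. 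I would take $\vec{v}=\PhiBss$. Rewriting \eqref{eq:equilB} in vector form as $\diag(1-\PhiBss)(\bB)^T\PhiBss=\diag(\dB)\PhiBss$ and substituting, the $\diag(\dB)\PhiBss$ terms cancel and leave $J_{22}\PhiBss=-\diag((\bB)^T\PhiBss)\PhiBss$. Strong connectivity of $\GB$ makes $\bB$ irreducible, so by Proposition \ref{prop:pf} the endemic equilibrium satisfies $\PhiBss\curlymore 0$, and consequently $(\bB)^T\PhiBss\curlymore 0$ componentwise; hence $J_{22}\PhiBss\curlyless 0$ and $J_{22}$ is Hurwitz.

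Assembling the pieces closes both directions at once: since $J_{22}$ is always Hurwitz, $J$ is Hurwitz if and only if $J_{11}$ is, and by Proposition \ref{prop:hurwitz} this is equivalent to local exponential stability of $\Phiss$. I expect the $J_{22}$ argument to be the main obstacle, specifically justifying the positive-vector characterization of Hurwitz Metzler matrices and the strict positivity of $\PhiBss$ and of $(\bB)^T\PhiBss$ on which the strict inequality depends; this is exactly where strong connectivity and the Perron--Frobenius Theorem are essential. A secondary subtlety is the degenerate case $\PhiBss=0$ (a subcritical $\virB$ layer), for which $\vec{v}=\PhiBss$ is inadmissible; there one handles $J_{22}=(\bB)^T-\diag(\dB)$ directly, which is Hurwitz exactly under the sub-threshold condition that makes $\PhiBss=0$ the unique solution of \eqref{eq:equilB}.
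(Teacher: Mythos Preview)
Your proposal is not a proof of Proposition~\ref{prop:hurwitz}. That proposition is a standard textbook result quoted from \cite{Khalil_Nonlinear02}, and the paper does not prove it; it is merely stated inside the proof of Theorem~\ref{thm:stab} as a tool. What you have actually written is a proof of Theorem~\ref{thm:stab} (Mean-field Exponential Stability), and indeed you invoke Proposition~\ref{prop:hurwitz} as a black box inside your own argument rather than establishing it.

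Read as a proof of Theorem~\ref{thm:stab}, your argument tracks the paper's exactly through the linearization, the appeal to Proposition~\ref{prop:hurwitz}, and the block-triangular reduction of the spectrum of $J$ to the spectra of $J_{11}$ and $J_{22}$. The one substantive divergence is the handling of $J_{22}$. The paper observes that $J_{22}$ is precisely the Jacobian of the single-layer $SIS$ dynamics at its endemic equilibrium and cites the external stability result of Khanafer et al.\ (Proposition~\ref{prop:khanafer}) to conclude it is Hurwitz. You instead give a direct, self-contained argument: $J_{22}$ is Metzler, and testing the strictly positive vector $\PhiBss$ yields, via the equilibrium identity~\eqref{eq:equilB}, $J_{22}\PhiBss=-\diag((\bB)^T\PhiBss)\,\PhiBss\curlyless 0$, so the Perron--Frobenius\slash Metzler spectral-abscissa criterion gives Hurwitzness. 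This is correct and more self-contained than the paper's citation; the paper's route is shorter only because it outsources exactly this computation. Your explicit treatment of the degenerate disease-free case $\PhiBss=0$ is also a detail the paper leaves implicit in its appeal to Proposition~\ref{prop:khanafer}. One minor quibble: strict positivity of the endemic $\PhiBss$ is not literally a consequence of Proposition~\ref{prop:pf} (the equilibrium is not an eigenvector); it follows instead from irreducibility of $\bB$ together with~\eqref{eq:equilB} by a short propagation argument.
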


Since the Jacobian matrix J of the system is bounded and Lipshitz, we have that the nonlinear dynamics given by \eqref{eq:dynA} and \eqref{eq:dynB} are (locally) exponentially stable if and only if the linearized system~\eqref{eq:linear} is exponentially stable.  It remains to show that the hypothesis ensures that $J$ is Hurwitz.

We note that, due to the block $0$ in the upper-right entry of $J$, the eigenvalues of $J$ are given by the eigenvalues of the matrices $J_{11}$ and $J_{22}$.  Noting that $J_{11}$ is indeed the matrix the hypothesis claims to be Hurwitz, we may turn our attention to $J_{22}$.

The $J_{22}$ matrix is exactly the Jacobian of the dynamics of a single-virus, single-layer n-Intertwined $SIS$ system evaluated at its metastable equilibrium.  We may now call upon a prior result from the literature of single-layer $SIS$ processes to complete our analysis:
\begin{prop}[Single-layer Exp. Stability \cite{khanafer2014stability}] \label{prop:khanafer}
	Suppose $p^*$ is a equilibrium of the n-Intertwined $SIS$ dynamics occurring on an arbitrary single-layer graph.  Then $p^*$ is globally asymptotically stable, and locally exponentially stable.
\end{prop}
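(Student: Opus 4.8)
The plan is to treat the $n$-intertwined (NIMFA) SIS system
$$\dot{p}_i = (1 - p_i)\sum_{j} \beta_{ji} p_j - \delta_i p_i, \qquad i \in V,$$
as a cooperative dynamical system on the invariant cube $[0,1]^n$, combining a convergence result for strongly monotone flows (for global asymptotic stability) with a Perron--Frobenius argument on the linearization (for local exponential stability). First I would check that $[0,1]^n$ is positively invariant: on the face $p_i = 0$ the $i$-th component of the vector field equals $\sum_j \beta_{ji}p_j \geq 0$, while on $p_i = 1$ it equals $-\delta_i \leq 0$, so trajectories cannot leave the cube and the system is bounded and well posed. Next I would compute the Jacobian $J(p)$: assuming no self-loops (so $\beta_{ii}=0$), its off-diagonal entries are $\partial \dot p_i/\partial p_j = (1-p_i)\beta_{ji} \geq 0$ on the interior, so $J(p)$ is Metzler and the flow is cooperative; since the underlying graph is strongly connected, the sparsity pattern of $(\beta_{ji})$ renders $J(p)$ irreducible, making the flow strongly monotone on $(0,1)^n$.

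I would then pin down the equilibrium structure through the usual threshold dichotomy. Writing $s = \rho\big(\diag(\delta)^{-1}\beta^T\big)$ (equivalently, the sign of the spectral abscissa of $\beta^T - \diag(\delta)$), the origin is the unique equilibrium and is globally attracting when $s \leq 1$, whereas for $s > 1$ the origin is unstable and there is a unique endemic equilibrium $p^\star \in (0,1)^n$, its uniqueness following from monotonicity of the fixed-point map defining~\eqref{eq:equilB}. Global asymptotic stability of $p^\star$ on $[0,1]^n \setminus \{0\}$ would then follow from strongly monotone systems theory: every bounded orbit of an irreducible cooperative system converges to the equilibrium set, and order-preservation together with uniqueness of the interior equilibrium and instability of the origin squeezes all nontrivial trajectories onto $p^\star$. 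This convergence step is where I expect the main difficulty to lie, since one must rule out orbits stalling on the boundary of the cube and treat the sub- and supercritical regimes uniformly; a Volterra-type Lyapunov function would be the natural fallback if the monotonicity argument proves delicate.

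Finally, for local exponential stability I would show $M := J(p^\star)$ is Hurwitz directly. Using the equilibrium relation $(1-p_i^\star)\sum_j \beta_{ji}p_j^\star = \delta_i p_i^\star$, the diagonal of $M$ is $M_{ii} = -\big(\sum_j \beta_{ji}p_j^\star + \delta_i\big)$ and the off-diagonals are $M_{ij} = (1-p_i^\star)\beta_{ji} \geq 0$, so $M$ is an irreducible Metzler matrix. Testing the strictly positive vector $v = p^\star$ and invoking the equilibrium relation gives $(Mv)_i = -p_i^\star \sum_j \beta_{ji}p_j^\star < 0$ for every $i$. Pairing this with the positive left Perron eigenvector $w$ of $M$ (the Perron vector of $(M+cI)^T$ for $c$ large, supplied by Proposition~\ref{prop:pf}) yields $\lambda^\star\, w^T v = w^T (Mv) < 0$ with $w^T v > 0$, forcing the leading eigenvalue $\lambda^\star < 0$. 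Hence $M$ is Hurwitz and $p^\star$ is locally exponentially stable, completing the claim.
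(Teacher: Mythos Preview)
The paper does not prove this proposition at all: it is quoted verbatim as a result from \cite{khanafer2014stability} and invoked as a black box inside the proof of Theorem~\ref{thm:stab} to conclude that $J_{22}$ is Hurwitz. There is therefore no ``paper's own proof'' to compare against.

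That said, your sketch is a sound independent argument and worth a brief comment. The local exponential stability part is clean and complete: the computation $(Mp^\star)_i = -p_i^\star\sum_j\beta_{ji}p_j^\star<0$ is correct, and pairing it with the positive left Perron vector of the irreducible Metzler matrix $M$ is the standard way to force the spectral abscissa below zero. For global asymptotic stability, your instinct that the pure monotone-systems route is the delicate step is right: Hirsch's generic convergence theorem for strongly monotone flows does not by itself rule out non-convergent boundary orbits or guarantee that \emph{every} trajectory in $[0,1]^n\setminus\{0\}$ reaches $p^\star$. The argument can be closed either by exploiting the sublinearity/concavity of the vector field (so that the order interval between $0$ and any super-equilibrium collapses onto $p^\star$), or, as you note, by the Lyapunov construction in \cite{khanafer2014stability}. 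Either way your outline is essentially correct; the paper simply defers the entire result to the cited reference.
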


We may now use Proposition \ref{prop:khanafer} to claim that $\PhiBss$ is a locally exponentially stable equilibrium point of the simplified (i.e. single-layer) model.  By Proposition \ref{prop:hurwitz} it must be that $J_{22}$ is Hurwitz, as it is componentwise bounded and Lipshitz.  

Since both $J_{11}$ and $J_{22}$ are Hurwitz, it must be that $J$ is Hurwitz.  Hence, it must be that $\Phiss$ is an exponentially stable equilibrium  point of the nonlinear system described by \eqref{eq:dynA}-\eqref{eq:dynB}.  Since all of the relations used in the proof are equivalences, no further considerations are necessary. 
\end{proof}


\begin{remark}[Homogeneous Threshold]
{\rm
	Note that this is similar to, but more general than, the stability results presented in \cite{Sahneh_CompSpread14}.  In particular, the condition in \cite{Sahneh_CompSpread14} requires that all infection rates $\bAij$ and recovery rates $\dAi$ take on homogeneous values $\beta$ and $\delta$ such that
	$\frac{\beta}{\delta} < \frac{1}{\lambda_{\max}(\diag(1 - \PhiBss)A)}.$  
	By inspection, it is clear that our result permits parameter choices which are excluded by this condition.  
	} \oprocend
\end{remark}

The form of the matrix we need to stabilize to guarantee the extinction of epidemic $\virA$ is similar to the matrix needed to guarantee extinction when we ignore competition.  In particular, we note that a simple consequence of prior work on the $SIS$ process (see, e.g., \cite{Preciado_OptRecAlloc14}) is that a sufficient condition for the exponentially fast elimination of the process spreading $\virA$ is that $\lambda_{\max}((\bA)^T  - \diag(\dA)) < 0$ holds.  By accounting for persistent competition among the epidemic processes, we might expect that our condition allows for more aggressive parameter selections.  We will show that this is true in a rigorous sense with our next result, which we will develop by first considering a technical lemma, and then specializing to our setting.

\begin{lem}[Row Compression Inequality] \label{lem:row_comp}
Let $M \in \realnonnegative^{(n \times n)}$, $\vec{\alpha} \in [0,1]^n$ and $\vec{\gamma} \in \real^n$.  Then, the following inequality holds:
	\begin{equation}
	\lambda_{\max} \left( \diag(\vec{\alpha})M - \diag(\vec{\gamma}) \right) \leq \lambda_{\max} \left(M - \diag(\vec{\gamma}) \right).
	\end{equation}
\end{lem}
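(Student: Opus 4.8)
The plan is to reduce the claim to the well-known monotonicity of the Perron--Frobenius root under the entrywise ordering of nonnegative matrices. First I would observe that both $\diag(\vec{\alpha})M - \diag(\vec{\gamma})$ and $M - \diag(\vec{\gamma})$ are Metzler matrices: since $M \in \realnonnegative^{n \times n}$ and $\alpha_i \geq 0$, every off-diagonal entry of $\diag(\vec{\alpha})M$ (and of $M$) is nonnegative, while $\diag(\vec{\gamma})$ affects only the diagonal. Here $\lambda_{\max}(\cdot)$ denotes the spectral abscissa (largest real part of an eigenvalue), and for a Metzler matrix this is attained by a real eigenvalue, by Perron--Frobenius applied after a diagonal shift, in the spirit of Proposition~\ref{prop:pf}.

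Next I would establish the entrywise inequality $\diag(\vec{\alpha})M - \diag(\vec{\gamma}) \leq M - \diag(\vec{\gamma})$. The diagonal terms $-\diag(\vec{\gamma})$ cancel, and for the remaining part $(\diag(\vec{\alpha})M)_{ij} = \alpha_i M_{ij} \leq M_{ij}$ for all $i,j$, using $0 \leq \alpha_i \leq 1$ and $M_{ij} \geq 0$; hence $\diag(\vec{\alpha})M \leq M$ entrywise and the desired inequality follows. Now choose a scalar $c > 0$ large enough that both $A := \diag(\vec{\alpha})M - \diag(\vec{\gamma}) + cI$ and $B := M - \diag(\vec{\gamma}) + cI$ are entrywise nonnegative; such $c$ exists since the only possibly negative entries lie on the diagonal. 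The shift preserves the entrywise inequality, so $0 \leq A \leq B$, and by the standard monotonicity of the spectral radius for nonnegative matrices, $\rho(A) \leq \rho(B)$. Finally, for any Metzler $N$ and $c$ chosen as above one has $\rho(N + cI) = \lambda_{\max}(N) + c$: every eigenvalue $\mu$ of $N$ yields an eigenvalue $\mu + c$ of $N + cI$ with $\mathrm{Re}(\mu) + c \leq |\mu + c| \leq \rho(N + cI)$, so $\mathrm{Re}(\mu) \leq \rho(N+cI) - c$, while $\rho(N+cI) - c$ is itself a real eigenvalue of $N$. Applying this to $N = \diag(\vec{\alpha})M - \diag(\vec{\gamma})$ and to $N = M - \diag(\vec{\gamma})$ and subtracting $c$ from $\rho(A) \leq \rho(B)$ gives exactly the claimed inequality.

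The only genuinely delicate point is this last step: transferring the monotonicity of the spectral radius of the shifted nonnegative matrices back to the spectral abscissa $\lambda_{\max}$ of the unshifted Metzler matrices. Everything else is a routine entrywise comparison. If one prefers to avoid the shift, an alternative is to invoke the Collatz--Wielandt characterization $\lambda_{\max}(N) = \sup_{x > 0} \min_i (Nx)_i / x_i$ valid for Metzler $N$, which is manifestly monotone in $N$ under the entrywise order; but the shift argument stays closest to the Perron--Frobenius statement already quoted in Proposition~\ref{prop:pf}.
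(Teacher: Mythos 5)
Your proof is correct, and it takes a genuinely different route from the one in the paper. The paper argues through the variational (Rayleigh-quotient) characterization: it compares $\sup_{\vec{v}\neq 0}\Re\left[\vec{v}^{*}N\vec{v}/\vec{v}^{*}\vec{v}\right]$ for the two matrices, first showing that the supremum may be restricted to $\vec{v}\in\realnonnegative^{n}$ and then comparing the quadratic forms entry by entry using $\alpha_k\in[0,1]$ and $m_{k\ell}\geq 0$. You instead run the order-theoretic Perron--Frobenius argument for Metzler matrices: the entrywise bound $\diag(\vec{\alpha})M\leq M$, a diagonal shift by $cI$ into the nonnegative cone, monotonicity of the spectral radius under the entrywise order, and the identity $\rho(N+cI)=\lambda_{\max}(N)+c$. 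Both proofs ultimately rest on the same elementary inequality $\alpha_i m_{ij}\leq m_{ij}$; they differ in how that entrywise fact is promoted to an eigenvalue statement. Your promotion is arguably the more robust of the two: for a non-normal matrix the quantity $\sup_{\vec{v}\neq 0}\Re[\vec{v}^{*}N\vec{v}/\vec{v}^{*}\vec{v}]$ is the numerical abscissa, which only upper-bounds the spectral abscissa, so the paper's variational route needs extra care to close the loop back to an inequality between the $\lambda_{\max}$'s themselves, whereas spectral-radius monotonicity for $0\leq A\leq B$ requires no irreducibility or normality hypotheses and transfers to $\lambda_{\max}$ exactly as you describe. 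The one point to tidy up is your citation: the fact you need in the final step is that the spectral radius of a (possibly reducible) nonnegative matrix is itself an eigenvalue, which is the general Perron--Frobenius theorem rather than Proposition~\ref{prop:pf} as stated (that version assumes irreducibility, which the lemma does not grant you for $M$); the general statement is standard and your argument goes through unchanged.
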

\begin{proof}
	See Appendix.
\end{proof}

Now, we may see an immediate consequence of Lemma \ref{lem:row_comp} - the presence of competition in any particular node helps to prevent the persistence of an unwanted behavior.
We make this formal as follows:
 
\begin{cor}[Benefits of Competition] \label{thm:competition}
	Take any set of values $(\bA,\dA)$ such that $\lambda_{\max}((\bA)^T  - \diag(\dA)) < 0$ holds and $\dAi > 0$ for all $i$.  Then, for any instance of the $S I_1 S I_2 S$ process, we must also have exponential elimination of $\virA$.
	
	Moreover, if $\PhiBss$ is such that $\PhiBiss > 0$ for some $i$, then there exists some $\hatbA$ with $\hatbAij \geq \bAij$ for all $i$ and $j$, $\hatbAij > \bAij$ for some $i$ and $j$, and $(\hatbA,\dA)$ exponentially eliminates $\virA$.
\end{cor}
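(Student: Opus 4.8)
The plan is to read both parts off Lemma~\ref{lem:row_comp} and Theorem~\ref{thm:stab}. The one preliminary fact I use is that any solution $\PhiBss$ of~\eqref{eq:equilB} satisfies $0\le\PhiBiss<1$ for every $i$: the right-hand side of~\eqref{eq:equilB} is a finite nonnegative number, so the left-hand side $\PhiBiss/(1-\PhiBiss)$ forces $\PhiBiss\in[0,1)$. Hence $1-\PhiBss\in(0,1]^n$, and both $\diag(1-\PhiBss)(\bA)^T-\diag(\dA)$ and $(\bA)^T-\diag(\dA)$ have nonnegative off-diagonal entries (Metzler matrices), so $\lambda_{\max}(\cdot)$ is well defined as their dominant real eigenvalue and Hurwitzness amounts to $\lambda_{\max}(\cdot)<0$.

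\emph{First claim.} Fix any choice of the $\virB$-side of the process. By Theorem~\ref{thm:stab}, exponential elimination of $\virA$ is equivalent to $J_{11}=\diag(1-\PhiBss)(\bA)^T-\diag(\dA)$ being Hurwitz. Apply Lemma~\ref{lem:row_comp} with $M=(\bA)^T\in\realnonnegative^{n\times n}$, $\vec{\alpha}=1-\PhiBss\in[0,1]^n$, and $\vec{\gamma}=\dA$:
\[
\lambda_{\max}(J_{11})\;\le\;\lambda_{\max}\!\big((\bA)^T-\diag(\dA)\big)\;<\;0 .
\]
So $J_{11}$ is Hurwitz, proving the first claim. (The hypothesis $\dAi>0$ only makes recovery nondegenerate; it is not otherwise needed here.)

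\emph{Second claim.} Pick $i$ with $\PhiBiss>0$ and put $c:=1/(1-\PhiBiss)>1$, finite by the preliminary fact. Since the layer $\GA$ is strongly connected, $i$ has an in-neighbour $k$ in $\GA$, and as $\bA$ inherits $A$'s sparsity pattern, $\bA_{ki}>0$. Define $\hatbA$ by scaling the $i$-th column of $\bA$ by $c$: $\hatbA_{\ell m}=\bA_{\ell m}$ for $m\ne i$ and $\hatbA_{\ell i}=c\,\bA_{\ell i}$. Then $\hatbA\ge\bA$ entrywise, $\hatbA_{ki}>\bA_{ki}$, and $\hatbA$ still inherits $A$'s sparsity pattern. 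Transposing turns the scaled column into the $i$-th row of $(\hatbA)^T$, and left-multiplication by $\diag(1-\PhiBss)$ rescales that row by $1-\PhiBiss=1/c$; hence the $i$-th row of $\diag(1-\PhiBss)(\hatbA)^T$ equals the $i$-th row of $(\bA)^T$, while for $j\ne i$ its $j$-th row is $(1-\PhiBjss)\le1$ times the $j$-th row of $(\bA)^T$. Therefore
\[
0\;\le\;\diag(1-\PhiBss)(\hatbA)^T\;\le\;(\bA)^T\quad\text{entrywise,}
\]
so $\diag(1-\PhiBss)(\hatbA)^T-\diag(\dA)$ and $(\bA)^T-\diag(\dA)$ are Metzler matrices ordered entrywise; adding a common scalar multiple of the identity to make them nonnegative and using monotonicity of the Perron root then gives
\[
\lambda_{\max}\!\big(\diag(1-\PhiBss)(\hatbA)^T-\diag(\dA)\big)\;\le\;\lambda_{\max}\!\big((\bA)^T-\diag(\dA)\big)\;<\;0 .
\]
By Theorem~\ref{thm:stab}, $(\hatbA,\dA)$ exponentially eliminates $\virA$, as claimed.

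\emph{Main obstacle.} The second part is the delicate one. One cannot simply apply Lemma~\ref{lem:row_comp} to $(\hatbA)^T$, since $(\hatbA)^T\ge(\bA)^T$ and the resulting bound $\lambda_{\max}((\hatbA)^T-\diag(\dA))$ may be nonnegative. The construction must instead be calibrated so that the row compression by $\diag(1-\PhiBss)$ exactly offsets the inflation of column $i$ — which is why the hypothesis $\PhiBiss>0$, i.e. $1-\PhiBiss<1$, is precisely what allows a strict increase in $\bA$ while keeping $J_{11}$ Hurwitz; this is the rigorous sense in which competition relaxes the spreading-rate budget. A non-constructive alternative invokes continuity of $\lambda_{\max}$ in the entries of $\bA$ — take $\hatbA=\bA+\varepsilon A$ for small $\varepsilon>0$, admissible because $\lambda_{\max}(J_{11})<0$ strictly — but it hides this structure.
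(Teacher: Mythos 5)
Your proof is correct and follows essentially the same route as the paper's: the first claim is the identical application of Lemma~\ref{lem:row_comp} with $M=(\bA)^T$, $\vec{\alpha}=1-\PhiBss$, $\vec{\gamma}=\dA$, and the second claim rests on the same calibration idea of inflating the spreading rates into node $i$ by $1/(1-\PhiBiss)$. The only difference is that the paper rescales \emph{every} column at once, so that $\diag(1-\PhiBss)(\hatbA)^T=(\bA)^T$ holds exactly and no further eigenvalue argument is needed, whereas you rescale a single column and therefore add a (correct) Perron-root monotonicity step for Metzler matrices.
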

\begin{proof}
	The first claim is a direct consequence of Theorem \ref{thm:stab} when we apply Lemma \ref{lem:row_comp} with $M = (\bA)^T$, $\vec{\alpha} = (1 - \PhiBss)$ and $\vec{\gamma} = \dA$.
	To prove the second claim, consider the matrix $\hatbA$ with entries $\hatbAji = \frac{1}{(1- \PhiBiss)} \bAji$.  Then, $$\diag(1-\PhiBss) (\hatbA)^T = (\bA)^T.$$  From our hypothesis that $\dAi > 0$ for all $i$, we have that $\PhiBiss \in [0,1)$ for all $i$.  Hence, $\hatbAij \geq \bAij$ for all $i$ and $j$, where the inequality is strict for the case where $\PhiBiss > 0$. 
\end{proof}

It is interesting to note that the result of Corollary \ref{thm:competition} admits an explicit characterization of \emph{how much} competition helps: for all agents $i$, we may scale the spreading rates associated to the incoming edges of node $i$ by a factor of at least $\frac{1}{(1-\PhiBiss)}$.  While the utility of this observation depends on the particular cost functions encountered in a given problem instance, this result does provide a clear benefit to considering competitive effects when considering resource allocations in a competitive environment. 

\section{Optimal Resource Allocations} \label{sec:opt_stab}

Having established conditions for exponential stability of the desired equilibrium, we now focus our attention on establishing means for designing resource allocations which create networks with desirable control properties.  We first consider the problem of designing a set of resource allocations so as to eliminate a chosen process at optimal cost when we are given functions which relate the chosen process parameter values to resource expenditures.  

In the context of a marketing problem, we may think of spending on resources such as product giveaways, consumer incentive programs, advertisement campaigns, etc. in order to affect the perception of a company within a given market.  To model this effect, we assume that for every designable parameter $\bAij$ and $\dAi$, we are given cost functions $f_{ij}$ and $g_i$ which relate a desired parameter value to a capital expenditure, the particular characteristics of which we assume to be application-specific.  With this notion developed, we may state our problem more formally as follows:
\begin{problem}[Optimal Extinction]\label{prob:min_stabilization}
	For some specified $S I_1 S I_2 S$ spreading process on a bilayer graph $G$ and given sets of cost functions $\{f_{ij}\}_{(i,j) \in \EA}$, $\{g_i\}_{i \in \VA}$, determine a minimum cost allocation of resources to enforce the extinction conditions for the equilibrium of Problem \ref{prob:stabilization}.
\end{problem}

From the discussion in Section \ref{sec:extinct}, we may formally cast Problem \ref{prob:min_stabilization} as the optimization program:
\begin{equation} \label{prog:opt_extinct}
	\begin{aligned}
	& \underset{\{\bA,\vec{\delta}^\virA\}}{\text{minimize}}
	& & \sum_{\{(i,j) \in E^A\}} f_{ij} \left(\bAij \right) + \sum_{\{i \in V^A\}} g_i \left(\dAi \right) \\
	& \text{subject to}
	& & \lambda_{\max}(J(\bA, \vec{\delta}^\virA) ) < 0\\
	\end{aligned}
\end{equation}
where $J$ is defined as in Theorem \ref{thm:stab}.  Note that  \eqref{prog:opt_extinct} is non-convex in general; it is an eigenvalue problem.  However, if we allow ourselves to restrict considerations to a reasonable class of cost functions, we may extend the work in \cite{Preciado_OptRecAlloc14} to arrive at a tractable solution.  

In particular, we will consider a method for transforming \eqref{prog:opt_extinct} into a convex problem when the cost functions are structured to make aggressive processes - those with higher spreading rates - more costly.  To ease the formal statement of the result, we will first introduce the notion of a posynomial transformation:
\begin{lem}[Posynomial Transformations] \label{lem:posy}
	Any function $f(x)$ of the form $f(x) = \sum_{k} c_k (\hat{x} - x)^{p_k}$ 
	with domain $\left(0, \hat{x} \right)$ with $\hat{x} > 0$, $c_k > 0$, and $p_k \in \real$ 
can be written as a posynomial function of a new variable $z = \hat{x} - x$ defined on the domain $\left(0, \hat{x} \right)$.
\end{lem}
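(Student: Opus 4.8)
The plan is to prove the claim by a direct change of variables, leaning on the definitions of \emph{monomial} and \emph{posynomial} recalled in Section~\ref{sec:Intro}. First I would set $z = \hat{x} - x$ and observe that, as $x$ ranges over $(0,\hat{x})$, the affine map $x \mapsto \hat{x} - x$ is a strictly decreasing bijection of $(0,\hat{x})$ onto $(0,\hat{x})$; in particular $z \in \realpositive$ on the entire domain, so that $z^{p_k}$ is well-defined as a real number for every real exponent $p_k$. Performing the substitution term by term gives $f(x) = \sum_k c_k (\hat{x}-x)^{p_k} = \sum_k c_k z^{p_k}$.

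Next I would check that the resulting expression matches the definition of a posynomial in the single positive variable $z$. Each summand $c_k z^{p_k}$ has a strictly positive leading constant $c_k > 0$ and consists of the single argument $z$ raised to a constant real power $p_k$, which is exactly the form of a monomial. A finite sum of monomials is, by definition, a posynomial, so $g(z) := \sum_k c_k z^{p_k}$ is a posynomial on $(0,\hat{x}) \subseteq \realpositive$, and $f(x) = g(\hat{x}-x)$ on $(0,\hat{x})$. This is precisely the assertion of the lemma.

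The only delicate point — the ``main obstacle,'' such as it is — is the bookkeeping on the domain: one must ensure the new variable never hits $0$ or a negative value, since posynomials are defined only on the positive orthant and non-integer powers of non-positive reals are not real-valued. This is settled immediately by the hypotheses $\hat{x} > 0$ and $x \in (0,\hat{x})$, which jointly force $z = \hat{x} - x \in (0,\hat{x})$, strictly positive. No further estimates, constructions, or case analysis are required, and all steps are reversible, so the correspondence between $f$ on $(0,\hat{x})$ and the posynomial $g$ on $(0,\hat{x})$ is exact.
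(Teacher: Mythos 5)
Your proof is correct and follows essentially the same route as the paper's: a direct substitution $z = \hat{x} - x$, a check that each term $c_k z^{p_k}$ is a monomial, and the observation that the domain maps onto $(0,\hat{x})$. If anything, your version is slightly more careful than the paper's in verifying that $z$ stays strictly positive so that real exponents are well-defined.
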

\begin{proof}
	Consider the variable substitution $z = \hat{x} - x$.  Then, we may write the posynomial transformation $\hat{f} \left(\hat{x} - x \right) = \sum_{k} c_k (z)^{p_k},$
	where we see that a value $z = 0 \mapsto x = \hat{x}$ and $z = \hat{x} \mapsto x = 0$.  Since the transformation is continuous, the domain of $\hat{f}$ is $\left(0, \hat{x} \right)$, as specified by the hypothesis.
\end{proof}

We will denote the class of functions with domain $(0,d)$ which admit a posynomial transformation in the sense of Lemma \ref{lem:posy} by $\Posy(0,d)$.
This class of functions will appear repeatedly in the remainder, and will see its first use in the following result:

\begin{theorem} \label{thm:dyn_stable}
	Consider an instance of the dynamics \eqref{eq:dynA}-\eqref{eq:dynB} with an equilibrium point of the form $\Phiss = \left[(\PhiAss)^T, (\PhiBss)^T \right]^T$ with $\PhiAiss = 0$ for all $i$ and $\PhiBss$ given by the solutions of \eqref{eq:equilB}.  Define $z_i = \left(1 - \PhiBiss \right)$ for all $i$. 
	
	Then, for any $S I_1 S I_2 S$ spreading process on a strongly connected bilayer graph $G$, any set of monotonically decreasing posynomial cost functions $\lbr f_{ij} \rbr_{\{(i,j) \in E^A\}}$, any set of functions $\{g_i \in \Posy(0,\dmaxAi)\}_{i = 1}^{|V^A|},$ 
	and any $\epsilon \in \left(0, \underset{i}{\min} \lbr \dmaxAi \rbr \right)$, an optimal solution of \eqref{prog:opt_extinct} can be computed by the solution of the following geometric program:
	\begin{equation} \label{prog:thm2}
		\begin{aligned}
			& \underset{\{\bA, \vec{t}, \lambda, \vec{u} \}}{\text{minimize}}
			& & \sum_{\{(i,j) \in E^A\}} f_{ij} \left(\bAij \right) + \sum_{\{i \in V^A\}} \hat{g}_i \left(t_i \right) \\
			& \text{subject to}
			& & \frac{\sum_{\jini{\Ain}} \bAji z_i u_j + t_i u_i + \epsilon u_i}{\lambda u_i} \leq 1 \; \forall i,\\
			&&& \frac{t_i}{\dmax} \leq 1 \; \forall i,\\
			&&& \frac{\left(\dmax - \dmaxAi \right)}{t_i} \leq 1 \; \forall i, \\
			&&& \bAij, \, u_i  \geq 0 \; \forall i,j , \\
			&&& 0 \leq \lambda \leq \dmax
		\end{aligned}
	\end{equation}
	where $\dmax > \underset{i}{\max} \lbr \dmaxAi \rbr $, $\hat{g}_i$ denotes the posynomial transform of $g_i$ and we set $\dAistar = \dmax - t_i^\star$, where $t_i^\star$ is given by the optimal solution to \eqref{prog:thm2}.
	
	Furthermore, the program is always feasible.
\end{theorem}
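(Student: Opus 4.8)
The plan is to recognize \eqref{prog:opt_extinct} as an eigenvalue-constrained program whose feasible set can be described --- modulo the slack $\epsilon$ --- by the bilinear Perron--Frobenius sub-invariance inequalities appearing in \eqref{prog:thm2}, and then to check that the resulting program has posynomial objective and posynomial-over-monomial constraints, i.e.\ is a geometric program. First I would simplify the constraint: by Theorem~\ref{thm:stab} the equilibrium is exponentially stable if and only if $J_{11} = \diag(1-\PhiBss)(\bA)^T - \diag(\dA)$ is Hurwitz, and since the full Jacobian $J$ of the linearization is block lower-triangular, $\lambda_{\max}(J) = \max\{\lambda_{\max}(J_{11}), \lambda_{\max}(J_{22})\}$; as established in the proof of Theorem~\ref{thm:stab} (via Propositions~\ref{prop:khanafer} and~\ref{prop:hurwitz}), $J_{22}$ is always Hurwitz, and moreover neither $J_{22}$ nor the vector $z$ with $z_i = 1-\PhiBiss \in (0,1]$ depends on the design variables $(\bA,\dA)$ --- both are fixed data determined by $\virB$ through \eqref{eq:equilB}. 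Hence the constraint $\lambda_{\max}(J) < 0$ of \eqref{prog:opt_extinct} is equivalent to $\lambda_{\max}(J_{11}) < 0$.

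Next I would reformulate $\lambda_{\max}(J_{11}) < 0$ in a form suited to geometric programming. The matrix $J_{11}$ is Metzler, and since $G_A$ is strongly connected (so $A$, hence $(\bA)^T$ under the inherited sparsity pattern, is irreducible) and $z_i > 0$, it is irreducible. Choosing $\dmax > \max_i \dmaxAi$ makes $B := J_{11} + \dmax I$ a nonnegative irreducible matrix with $\lambda_{\max}(B) = \lambda_{\max}(J_{11}) + \dmax$; by the Perron--Frobenius theorem (Proposition~\ref{prop:pf}) and the associated sub-invariance characterization of the Perron root, for any $\mu$ one has $\lambda_{\max}(B) \le \mu$ if and only if there is a vector $u$ with $u_i > 0$ and $Bu \le \mu u$ componentwise. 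Substituting $t_i = \dmax - \dAi$ gives $(Bu)_i = \sum_{\jini{\Ain}} \bAji z_i u_j + t_i u_i$, so $\lambda_{\max}(J_{11}) \le -\epsilon$ is equivalent to the solvability of $\sum_{\jini{\Ain}} \bAji z_i u_j + t_i u_i + \epsilon u_i \le \lambda u_i$ for all $i$ in variables $u > 0$ and $\lambda \le \dmax$ --- which is precisely the first constraint family of \eqref{prog:thm2} after dividing by the monomial $\lambda u_i$. The constraints $t_i/\dmax \le 1$ and $(\dmax - \dmaxAi)/t_i \le 1$ merely encode $0 \le \dAi \le \dmaxAi$. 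Finally, each $f_{ij}$ is a posynomial in $\bAij$ by hypothesis, each $g_i \in \Posy(0,\dmaxAi)$ becomes a posynomial $\hat g_i$ in $t_i$ by Lemma~\ref{lem:posy}, and every constraint is a posynomial divided by a monomial; hence \eqref{prog:thm2} is a geometric program whose feasible set, under the identification $\dAistar = \dmax - t_i^\star$, coincides with that of \eqref{prog:opt_extinct} with $\lambda_{\max}(J_{11}) < 0$ tightened to $\lambda_{\max}(J_{11}) \le -\epsilon$; since $\epsilon$ may be taken arbitrarily small and the costs do not depend on it, solving \eqref{prog:thm2} recovers an optimal solution of \eqref{prog:opt_extinct}.

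For feasibility I would exhibit an explicit point: take $u_i = 1$, $\lambda = \dmax$, $\dAi = \dmaxAi - \xi$ (so $t_i = \dmax - \dmaxAi + \xi$), and $\bAij = \eta$ on the edges of $G_A$, with $\xi, \eta > 0$ small. For any $\xi \in (0, \dmaxAi)$ the two $t_i$-constraints hold, and the first family reduces to $\eta\, z_i |\N^{\Ain}_i| + \xi + \epsilon \le \dmaxAi$ for all $i$, which is satisfiable for small $\xi, \eta$ precisely because $\epsilon < \min_i \dmaxAi$ by hypothesis. The main obstacle is the second step: carefully justifying the equivalence between the spectral-abscissa condition on the irreducible Metzler matrix $J_{11}$ and the bilinear sub-invariance inequalities $Bu \le \lambda u$, and correctly inserting the slack $\epsilon$ so that the geometric-program constraint set is closed while still capturing the open condition $\lambda_{\max}(J) < 0$ in the limit $\epsilon \downarrow 0$. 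The block-triangular reduction, the posynomial bookkeeping for the objective, and the verification of the explicit feasible point are all routine.
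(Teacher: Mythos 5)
Your proposal is correct and follows essentially the same route as the paper: reduce to the Hurwitz condition on $J_{11}$, shift by $\dmax I + \epsilon I$ to obtain a nonnegative irreducible matrix, apply Perron--Frobenius to turn the spectral condition into the bilinear constraints of \eqref{prog:thm2}, handle $\dAi$ via the substitution $t_i = \dmax - \dAi$ and the posynomial transform, and exhibit an explicit feasible point. The one place you diverge is in certifying the spectral bound: the paper starts from the Perron eigenvector equation and then argues that the relaxed constraints are met with equality at any optimum (using monotonicity of the $f_{ij}$), whereas you invoke the subinvariance characterization $\lambda_{\max}(B) \le \mu \iff \exists\, u > 0,\ Bu \le \mu u$ directly, which certifies the extinction condition at \emph{every} feasible point rather than only at tight optima --- a slightly cleaner and more robust rendering of the same Perron--Frobenius step.
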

\begin{proof}	
	Recall that the condition we need to attain to guarantee local exponential stability is that $$J_{11} = \diag\left(1 - \PhiBss \right)(\bA)^T - \diag(\dA)$$ is Hurwitz.  Noting that the only negative values of $J_{11}$ are from the term $-\diag(\dA)$, we can assert that the matrix $J_{11} + \dmax I + \epsilon I$ is a non-negative matrix, since each $\dAi \leq \dmax$ by definition.  Moreover, since $J_{11}$ is irreducible, $J_{11} + \dmax I + \epsilon I$ must be so as well.  
	
	Proposition \ref{prop:pf} then gives the existence of $\lambda > 0$ and $\vec{u}$ with $u_i > 0$ for all $i$ such that the equation $$(J_{11} + \dmax I + \epsilon I) \vec{u} = \lambda \vec{u}$$ is satisfied. If we relax the equation and make the substitution $t_i = \dmax - \dAi$ for all $i$, we can see that the inequalities:
	\begin{gather}
		\frac{\sum_{\jini{\Ain}} \bAji z_i u_j + t_i u_i + \epsilon u_i}{\lambda u_i} \leq 1 \; \quad \forall i, \label{const:spec_1}
	\end{gather}
	compose eigenvalue equations when met with equality.  It remains to show that any optimal solution to the geometric program is such that these constraints are met with equality.
	
	For purposes of identifying a contradiction, assume that there exists an optimal solution in which $\bAijstar$ is the computed optimal value of $\bAij$ for some constraint $i$ for which \eqref{const:spec_1} was not met with equality.  Noting that $\bAij$ affects no other constraint, we may increase $\bAijstar$ to some other value $\tbAij > \bAijstar$ such that \eqref{const:spec_1} is met with equality.  In doing so, we improve the value of the objective function, since $f_{ij}$ was specified as monotonically decreasing.  It must then be that our supposed solution was not optimal, and we have proven that \eqref{const:spec_1} is met with equality at any optimal solution.  By noting that the constraint $0 \leq \lambda \leq \dmax$ holds, we see that the leading eigenvalue of $J_{11}$ is negative, and the extinction condition required by Theorem \ref{thm:dyn_stable} is realized.  By applying our use of the posynomial transform, we may set $\dAistar = \dmax - t_i^\star$.
	
	It remains to prove the existence of a feasible solution for any permissible choice of program data.  We proceed by construction.  Select $\bA = \alpha A$ and $\dA = \gamma \vec{1}$.  Then, we can write the eigenvalue constraint as:
	{\small
	\begin{equation*}
		\begin{aligned}
	\lambda_{\max} \left(\diag\left(1 - \PhiBss\right) (\alpha A)^T - \gamma I + \dmax I + \epsilon I \right) < \dmax 
		\end{aligned}
	\end{equation*}
	}
	where if we choose $\gamma = \underset{i}{\min}\lbr \hat{\delta}_i^\virA \rbr$, we can reduce this to:
	$$\lambda_{\max}\left( \diag\left(1 - \PhiBss\right)(A)^T \right)< \frac{(\gamma - \epsilon)}{\alpha}$$  Since we can choose any $\alpha > 0$, our proof is complete.
\end{proof}

\begin{remark}
{\rm
	In a strict sense, \eqref{prog:thm2} generates an optimal solution to \eqref{prog:opt_extinct} in that the $\epsilon$ required by the statement of Theorem \ref{thm:dyn_stable} may be chosen arbitrarily close to $0$.  However, for any fixed $\epsilon > 0$, the solution obtained will be suboptimal.
	} \oprocend
\end{remark}

\begin{remark}[Cost Function Restrictions]
{\rm
We have solved Problem \ref{prob:min_stabilization} for the specified class of cost functions.  However, this restriction is slight within the context of the problem.  Given that the parameter $\bAij$ is a rate of spread, it is natural to associate it with a monotonically decreasing cost function; this captures the intuition that enforcing a phenomenon to be less aggressive is costly when attempting to extinct it.  Since we may choose any $g_i \in \Posy(0,\dmaxAi)$, our possible choices for $g_i$ are many.  To make the extent of this flexibility concrete, we note that $\Posy(0,\dmaxAi)$ includes the class of shifted finite-order polynomials with positive coefficients.
} \oprocend
\end{remark}



We now shift our focus to a setting in which exponential extinction may not be possible.  In particular, we consider a situation in which we are given a fixed operating budget $\budget > 0$, and we are tasked with mitigating the spread of the unwanted behavior insofar as it is possible.  As a proxy for making best use of the resources available, we are interested in solving the following problem: 
\begin{problem}[Fixed Budget Mitigation]\label{prob:rate_min}
	For some specified $S I_1 S I_2 S$ spreading processes on a bilayer graph $G$ and given cost functions, determine an allocation of resources which conforms to a budget~$\budget > 0$ and mitigates the extent of spread of a chosen behavior $\virA$ to whatever extent possible. 
\end{problem}

Our approach to this problem may be formalized as follows.  Since our budget is fixed, it may well be the case that our resources are insufficient for attaining the exponential extinction condition of Theorem \ref{thm:stab}.  However, we would like to design a program which recovers this condition whenever possible.  This may be accomplished by choosing the real component of the leading eigenvalue as the objective to our program, and adjusting the feasibility set accordingly.  We formalize this approach with the following:
\begin{theorem} \label{thm:min_rate}
	Consider an instance of the dynamics \eqref{eq:dynA}-\eqref{eq:dynB} with an equilibrium point of the form $\Phiss = \left[(\PhiAss)^T, (\PhiBss)^T \right]^T$ with $\PhiAiss = 0$ for all $i$ and $\PhiBss$ given by the solutions of \eqref{eq:equilB}.  Define $z_i = \left(1 - \PhiBiss \right)$ for all $i$.
	
	Then, for any $S I_1 S I_2 S$ spreading process on a strongly connected bilayer graph $G$, any set of monotonically decreasing posynomial cost functions $\lbr f_{ij} \rbr_{\{(i,j) \in E^A\}}$, any set of functions $\left\{g_i \in \Posy(0,\dmaxAi)\right\}_{i = 1}^{|V^A|},$ and any budget $\budget > 0$, Problem \ref{prob:rate_min} can be solved by the following geometric program:
	\begin{equation} \label{prog:thm3}
		\begin{aligned}
			& \underset{\{\bA, \vec{t}, \lambda , \vec{u}\}}{\text{minimize}}
			& & \lambda \\
			& \text{subject to}
			& & \frac{\sum_{\jini{\Ain}} \bAji z_i u_j + t_i u_i}{\lambda u_i} \leq 1 \; \forall i,\\
			&&& \frac{\sum_{\{(i,j) \in E^A\}} f_{ij} \left(\bAij \right) + \hat{g}_i \left(t_i \right)}{\budget} \leq 1 \; \forall i,\\
			&&& \frac{t_i}{\dmax} \leq 1 \; \forall i,\\
			&&& \frac{\left(\dmax - \dmaxAi \right)}{t_i} \leq 1 \; \forall i,\\
			&&& \bAij, \, u_i  \geq 0, \; \forall \; i,j \in V\\
		\end{aligned}
	\end{equation}
	where $\dmax > \underset{i}{\max}\lbr\dmaxAi\rbr$ and $\hat{g}_i$ denotes the posynomial transform of $g_i$, and we set $\dAistar = \dmax- t_i^\star$, where $t_i^\star$ is given by the optimal solution to \eqref{prog:thm3}.
\end{theorem}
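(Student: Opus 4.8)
The proof mirrors that of Theorem~\ref{thm:dyn_stable}, with the cost objective and the budget constraint exchanging roles. As explained just before the statement, the way we ``solve'' Problem~\ref{prob:rate_min} is to minimize the leading eigenvalue of $J_{11} = \diag(1-\PhiBss)(\bA)^T - \diag(\dA)$ over all spreading and healing rates consistent with the budget $\budget$; this eigenvalue is real (being, after the shift below, the Perron root of a nonnegative irreducible matrix, cf.\ Proposition~\ref{prop:pf}), it governs the local growth/decay rate of $\virA$ near the $\virA$-free equilibrium, and when it can be driven below $0$ we recover exactly the extinction condition of Theorem~\ref{thm:stab}. The plan is therefore: (i) recast ``minimize $\lambda_{\max}(J_{11})$ subject to the budget'' as a Perron--Frobenius program over an auxiliary positive vector $\vec{u}$ and scalar $\lambda$; (ii) relax the eigenvalue equalities to the inequalities appearing in \eqref{prog:thm3}, which turn out to be posynomial; and (iii) check that the remaining constraints are in geometric-programming form and invert the substitution via $\dAistar = \dmax - t_i^\star$.

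For (i)--(ii), I would substitute $t_i = \dmax - \dAi$ and add $\dmax I$, so that, writing $z_i = 1 - \PhiBiss$, the matrix $M := J_{11} + \dmax I = \diag(\vec{z})(\bA)^T + \diag(\vec{t})$ is entrywise nonnegative --- the only negative entries of $J_{11}$ come from $-\diag(\dA)$ and $\dAi \le \dmax$ --- while the box constraints $t_i/\dmax \le 1$ and $(\dmax - \dmaxAi)/t_i \le 1$ encode $0 \le \dAi \le \dmaxAi$ and hence keep $M \ge 0$ (the second being a well-posed monomial constraint since its numerator $\dmax - \dmaxAi$ is a positive constant, as $\dmax > \max_i \dmaxAi$). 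Because $G_A$ is strongly connected, $A$ --- hence $\bA$, which inherits $A$'s sparsity pattern, and hence $M$ --- is irreducible, and $z_i \in [0,1)$. By Proposition~\ref{prop:pf} together with its Collatz--Wielandt/subinvariance consequence, for any admissible parameters $\lambda_{\max}(M) = \min_{\vec{u} > 0}\max_i (M\vec{u})_i/u_i$, the minimizing $\vec{u}$ being the Perron vector. Since $(M\vec{u})_i = \sum_{\jini{\Ain}} \bAji z_i u_j + t_i u_i$, the first constraint family of \eqref{prog:thm3} is exactly $\max_i (M\vec{u})_i/u_i \le \lambda$; so minimizing $\lambda$ jointly over $(\lambda, \vec{u}, \bA, \vec{t})$ computes $\min \lambda_{\max}(M) = \dmax + \min \lambda_{\max}(J_{11})$ over the budget-feasible set. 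The corresponding optimal rates therefore minimize $\lambda_{\max}(J_{11})$ under the budget, and $\dAistar = \dmax - t_i^\star$ recovers the healing rates. Note that, in contrast with Theorem~\ref{thm:dyn_stable}, no separate argument is needed to force the eigenvalue constraints to bind: tightness is automatic because $\lambda$ is being minimized. Finally, if the optimal value satisfies $\lambda^\star < \dmax$, then the leading eigenvalue of $J_{11}$ at this allocation is negative and the extinction condition of Theorem~\ref{thm:stab} is met; otherwise the allocation realizes the smallest $\lambda_{\max}(J_{11})$ the budget allows.

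For (iii), I would verify that \eqref{prog:thm3} is a genuine geometric program: $\lambda$ is a monomial objective; each eigenvalue constraint is a posynomial (a sum of the monomials $z_i u_j \bAji$ and $t_i u_i$, all with positive coefficients) over the monomial $\lambda u_i$; the budget constraint is posynomial because each $f_{ij}$ is a monotonically decreasing posynomial and, by Lemma~\ref{lem:posy}, each $\hat{g}_i$ is the posynomial transform of $g_i \in \Posy(0,\dmaxAi)$ and hence a posynomial in $t_i$; the remaining box constraints are monomial, and positivity of all variables is automatic for a geometric program. The main obstacle is step (i)--(ii): establishing rigorously that the relaxed, inequality-constrained minimization over $(\lambda,\vec{u})$ returns $\lambda_{\max}(J_{11})$ itself rather than a loose bound. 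This rests squarely on the Collatz--Wielandt/subinvariance characterization of the Perron root of a nonnegative irreducible matrix, together with the bookkeeping --- the box constraints, strong connectivity, and $z_i > 0$ --- that guarantees $M = J_{11} + \dmax I$ remains nonnegative and irreducible throughout the feasible region. Everything downstream is routine substitution and inspection.
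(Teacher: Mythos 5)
Your proposal is correct and follows the same skeleton as the paper's proof: shift $J_{11}$ by $\dmax I$ via the substitution $t_i = \dmax - \dAi$ to obtain a nonnegative irreducible matrix, relax the Perron--Frobenius eigenvalue equations to the posynomial inequalities of \eqref{prog:thm3}, and verify that everything else is in geometric-programming form. The one place where you genuinely diverge is the tightness step. The paper argues that any optimal solution can be modified into one meeting the eigenvalue constraints with equality --- by increasing some $\bAij$ until equality holds, which stays budget-feasible because the $f_{ij}$ are monotonically decreasing and leaves $\lambda$ unchanged --- so that $(\lambda^\star,\vec{u}^\star)$ becomes a genuine eigenpair with positive eigenvector and hence $\lambda^\star$ is the Perron root. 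You instead invoke the Collatz--Wielandt/subinvariance characterization $\lambda_{\max}(M)=\min_{\vec{u}>0}\max_i (M\vec{u})_i/u_i$ to conclude directly that, for each fixed budget-feasible $(\bA,\vec{t})$, the inner minimization over $(\lambda,\vec{u})$ returns exactly $\dmax+\lambda_{\max}(J_{11})$, so no equality argument is needed. Your route is cleaner and arguably more robust: it does not lean on monotonicity of the $f_{ij}$ to close the relaxation gap (monotonicity then matters only for modeling), whereas the paper's perturbation argument does; both versions implicitly use that the $u_i$ are strictly positive, which is standard for geometric programs, as you note. One small slip: you write $z_i\in[0,1)$, but the correct range is $z_i\in(0,1]$ (since $\PhiBiss\in[0,1)$); this matters because $z_i>0$ is exactly what preserves irreducibility of $\diag(\vec{z})(\bA)^T+\diag(\vec{t})$, and you do correctly cite $z_i>0$ where it is needed later.
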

\begin{proof}
	We will show that the stated geometric program is an equivalent problem to minimizing the eigenvalue of $J_{11} = \diag\left(1 - \PhiBss \right)( \bA)^T - \diag(\dA)$. This will assure that when the specified cost is above the optimal cost threshold, we recover the desired extinction condition, and we otherwise try to come as close as possible.  Noting that $J_{11}$ is irreducible by construction and that $\dmax$ functions as an upper bound for all terms $\dAi$, it must be that $J_{11} + \dmax I$ is non-negative and irreducible.  Hence, Proposition \ref{prop:pf} applies and we must have the existence of some $\vec{u}$ such that $u_i > 0$ for all $i$ such that $(J_{11} + \dmax I) \vec{u} = \lambda \vec{u}$.
	
	As in the proof for Theorem \ref{thm:dyn_stable}, we can relax the eigenvalue equations with the substitution $t_i = \dmax - \dAi$ to obtain the inequalities:
	\begin{gather}
		\frac{\sum_{\jini{\Ain}} \bAji z_i u_j + t_i u_i}{\lambda u_i} \leq 1, \; \forall i. \label{const:spec_2}
	\end{gather}
	  
	To show how we may attain equality of \eqref{const:spec_2} at an optimal solution of \eqref{prog:thm3}, we may make a similar argument as to the proof of Theorem \ref{thm:dyn_stable}.  We will show that there always exists an optimal solution which meets the constraint with equality, and provide a method for constructing it.
	
	Suppose that there exists some optimal solution $$\lbr \lbr \bAijstar\rbr_{(i,j) \in E^A}, \lbr\dAistar, u_i^\star, t_i^\star \rbr_{i = 1}^{|V^A|}, \lambda^\star \rbr$$ at which \eqref{const:spec_2} is not met with equality for some $i$.  Since the $f_{ij}$ functions are monotonically decreasing, we may increase the value of $\bAij$ for some edge $(i,j)$ until equality is attained without violating the budget constraint.  As this increase neither changes the value of $\lambda$ nor make the solution infeasible, it must be that the new solution is optimal.  Hence, given any optimal solution of \eqref{prog:thm3}, we may compute an optimal solution with equality in \eqref{const:spec_2} by increasing values of $\bAij$. 
	
	Given the optimal solution in which \eqref{const:spec_2} is met with equality, we may then set $\dAistar = \dmax - t_i^\star$ to recover the values necessary to solve Problem \ref{prob:rate_min}.
\end{proof}
\begin{remark}
	{\rm
	Note that the program is convex for any specified $\left\{g_i \in \Posy(0,\dmaxAi)\right\}_{i = 1}^{|V^A|}$, however particular choices of $g_i$ may have strictly positive minimum values.  Hence, there exists the possibility that \eqref{prog:thm3} is infeasible.  This difficulty is avoided if we restrict our choices of $g_i$ further, e.g. to functions which satisfy $\lim_{\{z \goesto 0^+\}} g_i(z) = 0$.} \oprocend
\end{remark}
\begin{remark}
	{\rm
	Note that in the event that the specified budget is not enough for extinction, the method presented here only mitigates the epidemic spread in a heuristic sense.  Formal proof that the eigenvalue minimization specified is a good proxy for optimizing the attained steady state of the unwanted behavior is unavailable, however we show in Section \ref{sec:Sim} that the approach works well in simulation.} \oprocend
\end{remark}

We close this section by noting that the optimization programs \eqref{prog:thm2} and \eqref{prog:thm3} may be specialized to particular applications by the addition of further parameter constraints.  Of particular interest may be the inclusion of box constraints, such that we have $\bAij \in [\bAijlow, \bAijup]$ for all $i$ and $j$, and $\dAi \in [\dAilow, \dAiup]$ for all $i$, which would model a scenario in which some parameter values are only partially designable.  In addition, we may adding constraints which enforce equality between various parameters in order to reflect a situation in which control of each spreading or healing rate cannot happen in isolation.  However, since these extensions occasion no further mathematical difficulties, we will not explicitly consider them here.

\section{Simulations and Discussion} \label{sec:Sim}
Our simulations accomplish two tasks.  In Section \ref{subsec:opt_sims}, we consider the performance of the optimization methods designed with respect to the intended design goals of the procedures, and find that in the mean-field regime, both methods work well.  In Section \ref{subsec:meanfield}, we consider the accuracy of the mean-field model studied.

\subsection{Optimization Simulations} \label{subsec:opt_sims}
We consider the mean-field dynamics of a $100$-node multilayer graph, with $80$ nodes in the layer spreading behavior $\virA$, $80$ nodes in the layer spreading behavior $\virB$, and $60$ nodes in the layer intersection.  Each graph layer studied is a randomly generated strongly connected digraph, with the set of overlapping nodes selected uniformly at random from each subgraph.  As cost functions, we study $f_{ij}(\bAij) = \frac{1}{\bAij}$ for each edge, and $g_i(\dAi) = (\dmaxAi - \dAi)^2 + (\dmaxAi - \dAi)$ for each node.
	
From our analysis, we expect that a solution generated from \eqref{prog:thm2} will attain the extinction condition in a ``tight,'' sense: if the contagion were made any more aggressive, we should expect it would survive.  This is exactly what happens in Figure \ref{fig:thresh}, in which we consider the results of a sensitivity analysis of the solutions generated by \eqref{prog:thm2}.  Here, we plot the attained mean-field steady states as a function of a scaling of a solution generated by \eqref{prog:thm2} by the factor $\alpha$.  It is precisely when $\alpha > 1$ that the behavior survives in an endemic state, as expected. 

We study the efficacy of the fixed budget network design in Figure \ref{fig:budget_thresh}, where we perform a similar sensitivity analysis as the one presented in Figure \ref{fig:thresh}.  We plot the average mean-field steady state values attained by a graph designed by \eqref{prog:thm3} with a budget given by $\alpha \budget^\star$, where $\budget^\star$ is the optimum value of the budget of the given problem instance, as computed by \eqref{prog:thm2}.  We see that for all values of $\alpha \geq 1$ extinction is attained, as predicted by construction.  For values of $\alpha < 1$, we see that the behavior survives in an endemic state.  Moreover, the simulations seem to indicate that the attained steady state grows continuously with decreasing budget, which suggests that the eigenvalue minimization problem serves as a suitable approach to network design when the given budget is fixed.  

{\psfrag{0}[cc][cc]{\tiny $0$}
	\psfrag{0.1}[cc][cc]{\tiny \hspace*{-2mm} 0.1}
	\psfrag{0.2}[cc][cc]{\tiny \hspace*{-2mm} 0.2}
	\psfrag{0.3}[cc][cc]{\tiny \hspace*{-2mm} 0.3}
	\psfrag{0.4}[cc][cc]{\tiny \hspace*{-2mm} 0.4}
	\psfrag{0.5}[cc][cc]{\tiny \hspace*{-2mm} 0.5}
	\psfrag{0.6}[cc][cc]{\tiny \hspace*{-2mm} 0.6}
	\psfrag{0.7}[cc][cc]{\tiny \hspace*{-2mm} 0.7}
	\psfrag{0.8}[cc][cc]{\tiny \hspace*{-2mm} 0.8}
	\psfrag{0}[cc][cc]{\tiny \hspace*{-1mm} 0}
	\psfrag{5}[cc][cc]{\tiny \hspace*{-1mm} 5}
	\psfrag{10}[cc][cc]{\tiny \hspace*{-1mm} 10}
	\psfrag{15}[cc][cc]{\tiny \hspace*{-1mm} 15}
	\psfrag{20}[cc][cc]{\tiny \hspace*{-1mm} 20}
	\psfrag{25}[cc][cc]{\tiny \hspace*{-1mm} 25}
	\psfrag{30}[cc][cc]{\tiny \hspace*{-1mm} 30}
	\psfrag{35}[cc][cc]{\tiny \hspace*{-1mm} 35}
	\psfrag{40}[cc][cc]{\tiny \hspace*{-1mm} 40}
	\psfrag{thisisthresholdlong}[cc][cc]{\tiny \hspace{.5mm}Threshold}
	\psfrag{thisisphi1}[cc][cc]{\tiny $\PhiB$}
	\psfrag{thisisphi2}[cc][cc]{\tiny $\PhiA$}
	\psfrag{0}[cc][cc]{\tiny $0$}
	\psfrag{0.1}[cc][cc]{\tiny  0.1}
	\psfrag{0.2}[cc][cc]{\tiny  0.2}
	\psfrag{0.3}[cc][cc]{\tiny  0.3}
	\psfrag{0.4}[cc][cc]{\tiny  0.4}
	\psfrag{0.5}[cc][cc]{\tiny  0.5}
	\psfrag{0.6}[cc][cc]{\tiny  0.6}
	\psfrag{0.7}[cc][cc]{\tiny  0.7}
	\psfrag{0.8}[cc][cc]{\tiny  0.8}
	\psfrag{0.9}[cc][cc]{\tiny  0.9}
	\psfrag{1}[cc][cc]{\tiny \hspace*{-2mm} 1.0}
	\psfrag{1.1}[cc][cc]{\tiny  1.1}
	\psfrag{1.2}[cc][cc]{\tiny  1.2}
	\psfrag{1.3}[cc][cc]{\tiny  1.3}
	\psfrag{1.4}[cc][cc]{\tiny  1.4}
	\psfrag{1.5}[cc][cc]{\tiny \vspace*{-1.5mm} 1.5}
	\psfrag{2}[cc][cc]{\tiny \vspace*{-1.5mm} 2}
	\vspace{-10pt}
	\begin{figure}[] 
		\begin{center}
			{\includegraphics[width = 0.5\textwidth]{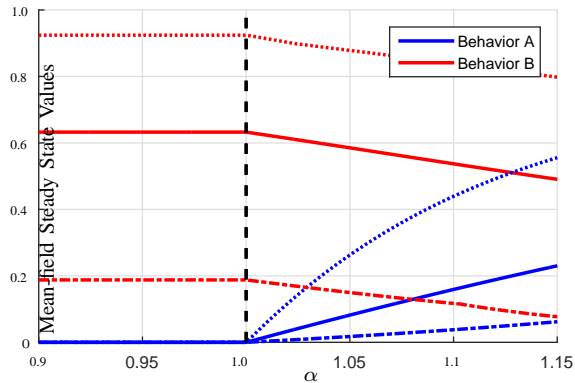}}
			\put(-120,2){\footnotesize $\alpha$}
			\put(-220,20){\footnotesize \rotatebox{90}{Mean-field Steady State Values}}	
		\end{center}
		\caption{A plot studying the sensitivity of the steady states of the heterogeneous mean-field $S I_1 S I_2 S$ model to scaling of solutions generated by \eqref{prog:opt_extinct}, denoted here by $\bAstar$.  The maximum, average, and minimum mean-field steady state values for $\bA = \alpha \bAstar$ are plotted on the $y$-axis, with the scale factor $\alpha$ is plotted on the $x$-axis.
		}  \label{fig:thresh}
	\end{figure}}

{
\psfrag{0}[cc][cc]{\tiny 0}
\psfrag{0.1}[cc][cc]{\tiny  0.1}
\psfrag{0.2}[cc][cc]{\tiny  0.2}
\psfrag{0.3}[cc][cc]{\tiny  0.3}
\psfrag{0.4}[cc][cc]{\tiny  0.4}
\psfrag{0.5}[cc][cc]{\tiny  0.5}
\psfrag{0.6}[cc][cc]{\tiny  0.6}
\psfrag{0.7}[cc][cc]{\tiny  0.7}
\psfrag{0.8}[cc][cc]{\tiny  0.8}
\psfrag{0.9}[cc][cc]{\tiny  0.9}
\psfrag{0.95}[cc][cc]{\tiny  0.95}
\psfrag{1.05}[cc][cc]{\tiny  1.05}
\psfrag{1.1}[cc][cc]{\tiny  1.10}
\psfrag{1.15}[cc][cc]{\tiny  1.15}
\psfrag{1}[cc][cc]{\tiny \hspace*{-2mm} 1.0}
\psfrag{1.5}[cc][cc]{\tiny  1.5}
\psfrag{2}[cc][cc]{\tiny  2.0}
\psfrag{2.5}[cc][cc]{\tiny  2.5}
\psfrag{4}[cc][cc]{\tiny  4.0}
\psfrag{3}[cc][cc]{\tiny  3.0}
\psfrag{3.5}[cc][cc]{\tiny  3.5}

\begin{figure}[htb] 
\begin{center}
{\includegraphics[width = 0.45\textwidth]{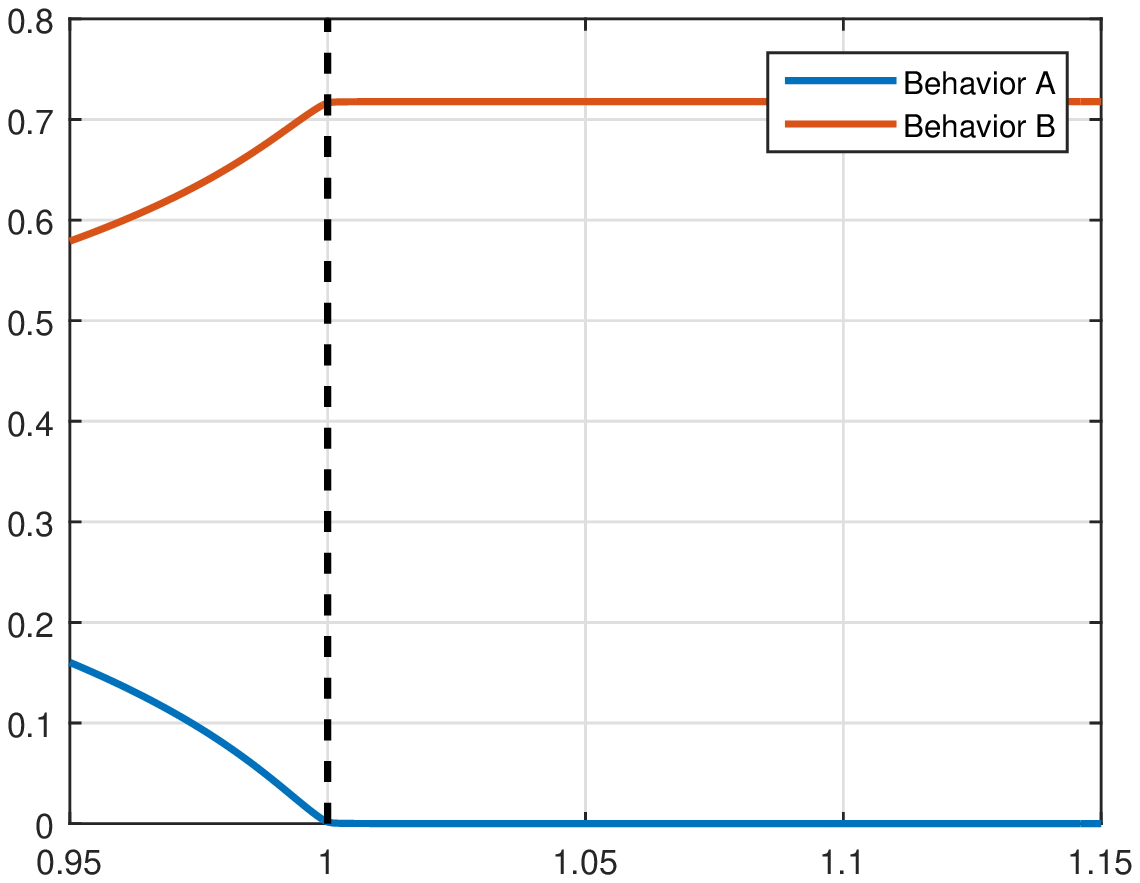}}
\put(-115,2){\footnotesize $\alpha$}
\put(-220,23){\footnotesize \rotatebox{90}{Steady State Mean-field Infection Rates}}
\end{center}
\caption{A plot of the mean steady state mean-field values of $\virA$, and $\virB$ of the solution of the optimization of Theorem \ref{thm:min_rate} with budget given by $\alpha \budget^\star$ against $\alpha$, where $\budget^\star$ is the optimal budget of Theorem \ref{thm:dyn_stable}.} \label{fig:budget_thresh}
\end{figure}}

\subsection{Mean-field Simulations} \label{subsec:meanfield}
To give an honest accounting of the utility of our results, it is necessary to investigate the relation between the behavior of the mean-field approximation we study and the $S I_1 S I_2 S$ process itself.  We first study graphs generated as solutions to \eqref{prog:thm2} with data generated as in Section \ref{subsec:opt_sims}.  A typical result of this simulation is given in Figure \ref{fig:mc_thresh_sim}.  Here we note that the transient response of the mean-field approximation is not tight, but the steady-state values appear to be accurate.  This may be an effect of the equilibrium considered in our analysis: since epidemic $\virA$ extincts, epidemic $\virB$'s dynamics eventually recover the standard $SIS$ dynamics, which is thought to be a good approximation for sufficiently large graphs \cite{VanMieghem_Virus09}. 

To asses the limits of the mean-field model's accuracy in greater generality, we consider the behavior of the model when both epidemic $\virA$ and epidemic $\virB$ survive in an endemic state in a $100$-node bi-layer random graph.  We display this in Figure \ref{fig:mf_break}, where the results were taken from a $150$-trial simulation of the $S I_1 S I_2 S$ process, and represent a typical case.  We find that the mean-field model is not necessarily accurate in this circumstance.  For the particular instance displayed in Figure \ref{fig:mf_break}, the error incurred between the ensemble average of the simulated $S I_1 S I_2 S$ process and the mean-field approximation is $\approx 0.2$.  Moreover, by considering the spread of the observed $60\%$ confidence bounds of the process, we conclude that the sample paths are not well concentrated about the their expectation, and hence there is a non-negligible amount of stochasticity which is left unaccounted for by the mean-field model.

{
\psfrag{0}[cc][cc]{\tiny $0$}
\psfrag{0.1}[cc][cc]{\tiny \hspace*{-2mm} 0.1}
\psfrag{0.2}[cc][cc]{\tiny \hspace*{-2mm} 0.2}
\psfrag{0.3}[cc][cc]{\tiny \hspace*{-2mm} 0.3}
\psfrag{0.4}[cc][cc]{\tiny \hspace*{-2mm} 0.4}
\psfrag{0.5}[cc][cc]{\tiny \hspace*{-2mm} 0.5}
\psfrag{0.6}[cc][cc]{\tiny \hspace*{-2mm} 0.6}
\psfrag{0.7}[cc][cc]{\tiny \hspace*{-2mm} 0.7}
\psfrag{0.8}[cc][cc]{\tiny \hspace*{-2mm} 0.8}
\psfrag{0}[cc][cc]{\tiny \hspace*{-1mm} 0}
\psfrag{5}[cc][cc]{\tiny \hspace*{-1mm} 5}
\psfrag{10}[cc][cc]{\tiny \hspace*{-1mm} 10}
\psfrag{15}[cc][cc]{\tiny \hspace*{-1mm} 15}
\psfrag{20}[cc][cc]{\tiny \hspace*{-1mm} 20}
\psfrag{25}[cc][cc]{\tiny \hspace*{-1mm} 25}
\psfrag{30}[cc][cc]{\tiny \hspace*{-1mm} 30}
\psfrag{35}[cc][cc]{\tiny \hspace*{-1mm} 35}
\psfrag{40}[cc][cc]{\tiny \hspace*{-1mm} 40}
\psfrag{thisisthresholdlong}[cc][cc]{\tiny \hspace{.5mm}Threshold}
\psfrag{thisisphi1}[cc][cc]{\tiny $\PhiB$}
\psfrag{thisisphi2}[cc][cc]{\tiny $\PhiA$}
\psfrag{0}[cc][cc]{\tiny $0$}
\psfrag{0.1}[cc][cc]{\tiny  0.1}
\psfrag{0.2}[cc][cc]{\tiny  0.2}
\psfrag{0.3}[cc][cc]{\tiny  0.3}
\psfrag{0.4}[cc][cc]{\tiny  0.4}
\psfrag{0.5}[cc][cc]{\tiny  0.5}
\psfrag{0.6}[cc][cc]{\tiny  0.6}
\psfrag{0.7}[cc][cc]{\tiny  0.7}
\psfrag{0.8}[cc][cc]{\tiny  0.8}
\psfrag{0.9}[cc][cc]{\tiny  0.9}
\psfrag{1}[cc][cc]{\tiny \hspace*{-2mm} 1.0}
\psfrag{1.1}[cc][cc]{\tiny  1.1}
\psfrag{1.2}[cc][cc]{\tiny  1.2}
\psfrag{1.3}[cc][cc]{\tiny  1.3}
\psfrag{1.4}[cc][cc]{\tiny  1.4}
\psfrag{1.5}[cc][cc]{\tiny \vspace*{-1.5mm} 1.5}
\psfrag{2}[cc][cc]{\tiny \vspace*{-1.5mm} 2}

\begin{figure} 
\centering
\subfigure[Epidemic $\virA$]{\includegraphics[width=.5\textwidth]{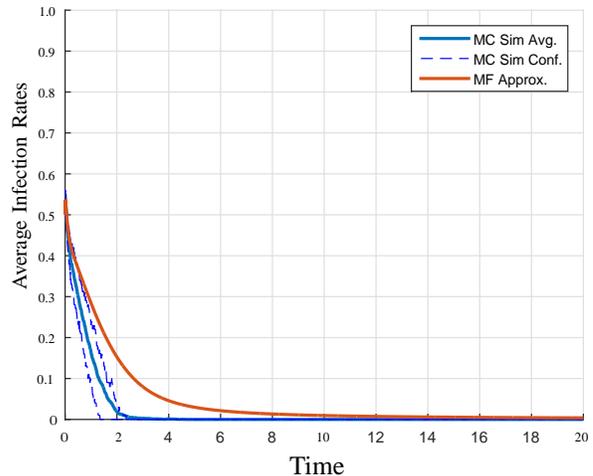}}
\put(-135,0){Time}
\put(-240,70){\footnotesize \rotatebox{90}{Average Infection Rates}}
\hfill
\subfigure[Epidemic $\virB$]{\includegraphics[width=.5\textwidth]{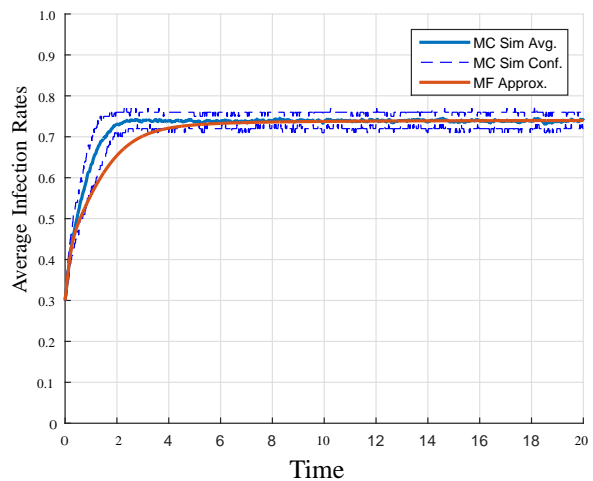}}
\put(-135,0){Time}
\put(-240,70){\footnotesize \rotatebox{90}{Average Infection Rates}}
\caption{The spreading behavior of the $S I_1 S I_2 S$ process as compared to the mean-field approximation.  These simulations were performed on the graphs used for the simulations of Section \ref{subsec:opt_sims}.  The confidence bounds plotted contain $60\%$ of the sample paths of the simulations.}\label{fig:mc_thresh_sim}
\end{figure}}

{
	\psfrag{0}[cc][cc]{\tiny $0$}
	\psfrag{0.1}[cc][cc]{\tiny \hspace*{-2mm} 0.1}
	\psfrag{0.2}[cc][cc]{\tiny \hspace*{-2mm} 0.2}
	\psfrag{0.3}[cc][cc]{\tiny \hspace*{-2mm} 0.3}
	\psfrag{0.4}[cc][cc]{\tiny \hspace*{-2mm} 0.4}
	\psfrag{0.5}[cc][cc]{\tiny \hspace*{-2mm} 0.5}
	\psfrag{0.6}[cc][cc]{\tiny \hspace*{-2mm} 0.6}
	\psfrag{0.7}[cc][cc]{\tiny \hspace*{-2mm} 0.7}
	\psfrag{0.8}[cc][cc]{\tiny \hspace*{-2mm} 0.8}
	\psfrag{0}[cc][cc]{\tiny \hspace*{-1mm} 0}
	\psfrag{5}[cc][cc]{\tiny \hspace*{-1mm} 5}
	\psfrag{10}[cc][cc]{\tiny \hspace*{-1mm} 10}
	\psfrag{15}[cc][cc]{\tiny \hspace*{-1mm} 15}
	\psfrag{20}[cc][cc]{\tiny \hspace*{-1mm} 20}
	\psfrag{25}[cc][cc]{\tiny \hspace*{-1mm} 25}
	\psfrag{30}[cc][cc]{\tiny \hspace*{-1mm} 30}
	\psfrag{35}[cc][cc]{\tiny \hspace*{-1mm} 35}
	\psfrag{40}[cc][cc]{\tiny \hspace*{-1mm} 40}
	\psfrag{thisisthresholdlong}[cc][cc]{\tiny \hspace{.5mm}Threshold}
	\psfrag{thisisphi1}[cc][cc]{\tiny $\PhiB$}
	\psfrag{thisisphi2}[cc][cc]{\tiny $\PhiA$}
	\psfrag{0}[cc][cc]{\tiny $0$}
	\psfrag{0.1}[cc][cc]{\tiny  0.1}
	\psfrag{0.2}[cc][cc]{\tiny  0.2}
	\psfrag{0.3}[cc][cc]{\tiny  0.3}
	\psfrag{0.4}[cc][cc]{\tiny  0.4}
	\psfrag{0.5}[cc][cc]{\tiny  0.5}
	\psfrag{0.6}[cc][cc]{\tiny  0.6}
	\psfrag{0.7}[cc][cc]{\tiny  0.7}
	\psfrag{0.8}[cc][cc]{\tiny  0.8}
	\psfrag{0.9}[cc][cc]{\tiny  0.9}
	\psfrag{1}[cc][cc]{\tiny \hspace*{-2mm} 1.0}
	\psfrag{1.1}[cc][cc]{\tiny  1.1}
	\psfrag{1.2}[cc][cc]{\tiny  1.2}
	\psfrag{1.3}[cc][cc]{\tiny  1.3}
	\psfrag{1.4}[cc][cc]{\tiny  1.4}
	\psfrag{1.5}[cc][cc]{\tiny \vspace*{-1.5mm} 1.5}
	\psfrag{2}[cc][cc]{\tiny \vspace*{-1.5mm} 2}
\begin{figure} 
	\centering
	\subfigure[Behavior $\virA$]{\includegraphics[width=.5\textwidth]{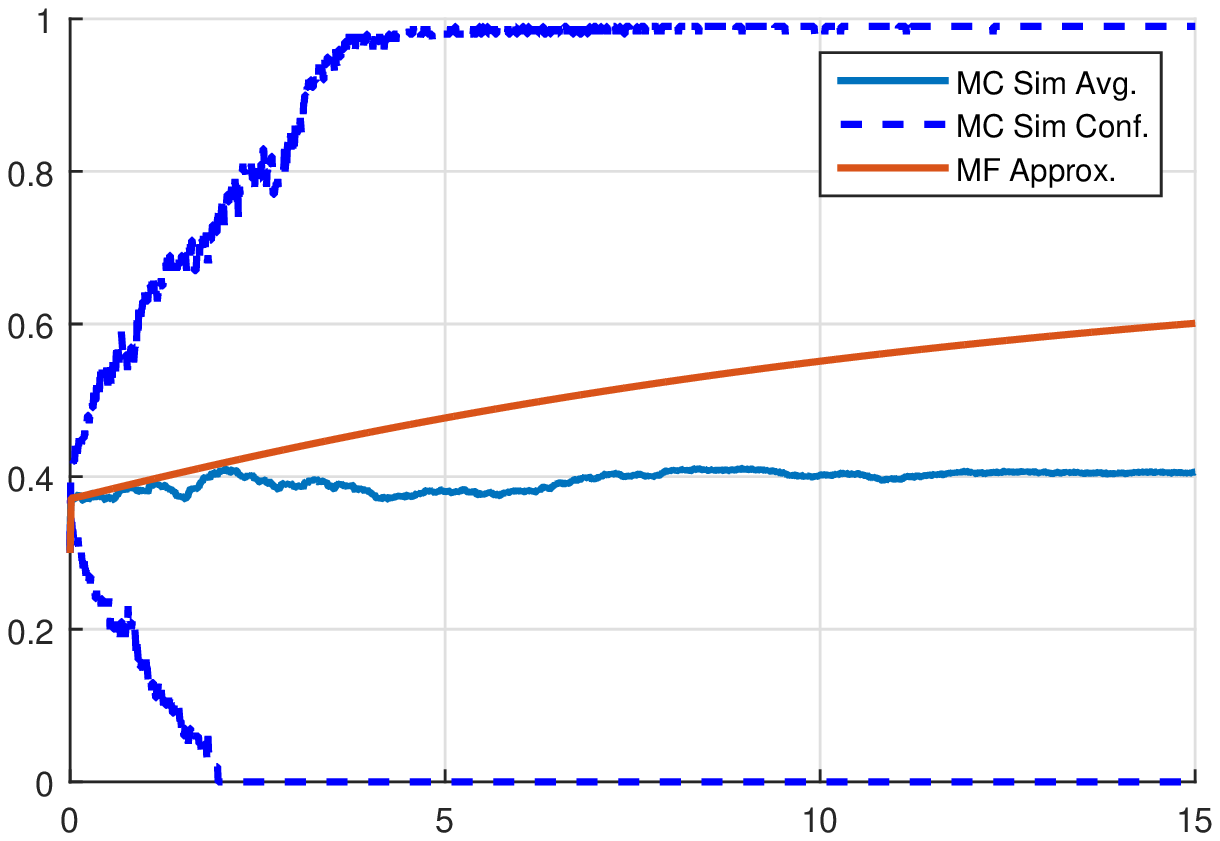}}
	\put(-135,0){Time}
	\put(-240,50){\footnotesize \rotatebox{90}{Average Infection Rates}}
	\hfill
	\subfigure[Behavior $\virB$]{\includegraphics[width=.5\textwidth]{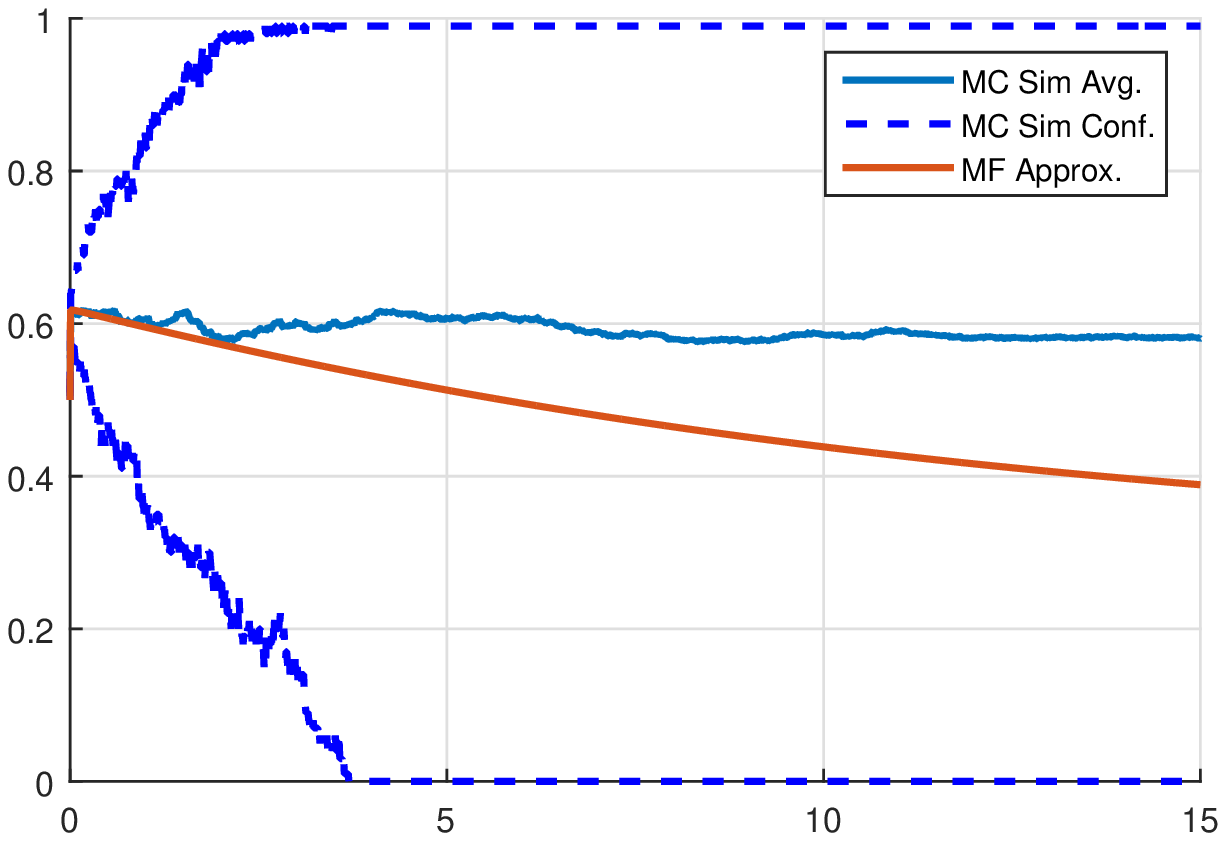}}
	\put(-135,0){Time}
	\put(-240,50){\footnotesize \rotatebox{90}{Average Infection Rates}}
	\caption{The spreading behavior of the $S I_1 S I_2 S$ process as compared to the mean-field approximation.  These simulations were performed on $100$-node random graphs with $50\%$ sparsity.  The confidence bounds plotted contain $60\%$ of the sample paths of the simulations.}\label{fig:mf_break}
\end{figure}}

\section{Summary and Future Work} \label{sec:Discuss}

The class of multilayer spreading processes is one with much potential.  We have managed to define a framework in which the earlier work on competitive multilayer processes can be extended to a class of heterogeneously parametrized processes on generalized graph layers.  Moreover, we have provided a first step in analyzing competitive multilayer spreading processes by finding necessary and sufficient conditions for the exponential stability for any equilibrium of the system in which one process extincts exponentially quickly and the other survives in an endemic state.  

Furthermore, we have developed an optimization program for determining optimal-cost parameter distributions such that the desired equilibrium is stabilized, and another which performs a heuristic design in the case of a fixed budget.  We have found that the designed optimization routines work well in simulation in both cases.  The marketing problem we posed as motivation is just one example of many which can be posed for a class of such equilibria.  By redefining the meaning of the variable states, we can apply our model to diverse settings; potential examples include optimizing political strategies and protecting against viral spread.  

This work opens many possible avenues for future research. A useful generalization would be an extension to a k-layer, k-process framework, as such an extension could greatly improve the modeling capacity of the tools developed.  Additionally, we can place further assumptions on the set of controllable parameters and the objective of our resources allocations.  For example, it may be reasonable to have control over both the spreading parameters of $\virA$ and $\virB$, in which case it may be desirable to specify a steady state and compute an optimal allocation which attains it.  It may also be of interest to further define tractable methods for computing conditions of coexistence and extinction - both of which have a useful interpretation in certain contexts.

Perhaps the most important question left open pertaining to this work is the relation of the mean-field approximation to the underlying stochastic process.  While some form of approximation is necessary in order to avoid the exponential state space of the exact representation of the system, it is clear that other approximation schemes can be considered, and it is currently unclear which methods are most effective in which contexts.  While the approximation technique applied here works well for the extinction problem considered in simulation, it is clear that a more precise understanding of the interrelation between the approximated dynamics and the exact dynamics is a substantial requirement for future work.
\section*{Acknowledgements}
This work was supported in part by the National Science Foundation grant CNS-1302222 “NeTS: Medium: Collaborative Research: Optimal Communication for Faster Sensor Network Coordination”, IIS-1447470 “BIGDATA: Spectral Analysis and Control of Evolving Large Scale Networks”, and the TerraSwarm Research Center, one of six centers supported by the STARnet phase of the Focus Center Research Program (FCRP), a Semiconductor Research Corporation program sponsored by MARCO and DARPA.
\vspace{-7pt}
\bibliography{master}
\vspace{-15pt}
\bibliographystyle{IEEEtran}
\vspace{10pt}

\appendix

\begin{proof}[Lemma 1]
	By using the variational characterization of eigenvalues, it will suffice to show that 
	\begin{equation} \label{eq:eign_ineq2}
		\begin{aligned}
		&\sup_{\vec{v} \neq 0}\Re \left[ \frac{(\vec{v})^* \left( \diag(\vec{\alpha})M - \diag(\vec{\gamma})\right)\vec{v}}{(\vec{v})^* \vec{v}} \right] \\
		&\leq \sup_{\vec{v} \neq 0}\Re \left[ \frac{(\vec{v})^* \left(M - \diag(\vec{\gamma})\right)\vec{v}}{(\vec{v})^* \vec{v}} \right]
		\end{aligned}
	\end{equation} holds, where we use $\Re$ to denote an operator which returns the real part of its argument and $(\cdot)^*$ denotes the conjugate transpose of a vector.  Our demonstration of this fact requires that we establish two pieces: (i) we may evaluate the supremums over $\vec{v} \in \realnonnegative^n$ without affecting their attained value, and (ii) for each $\vec{v} \in  \realnonnegative^n$, the desired inequality follows immediately.
	To argue (i), we will only explicitly consider the left hand side of \eqref{eq:eign_ineq2}, the right hand side follows from similar arguments.  Fix some $\tilde{v} \in \complex^n$ with components $\tilde{v}_r = \tilde{x}_r + i \tilde{y}_r$; we will show that we may always construct some vector $\hat{v} \in \realnonnegative^n$ which increases the value of the function evaluated by the supremum.  For our choice of $\tilde{v}$, we may write the argument of the the left hand side of \eqref{eq:eign_ineq2} as
	\begin{equation*}
		\Re \left[\frac{\sum_{k \neq \ell} (\tilde{v}_k)^* \tilde{v}_{\ell} \alpha_k m_{k \ell} + \sum_{k} (\tilde{v}_k)^* \tilde{v}_k \gamma_k}{\tilde{v}^* \tilde{v}} \right].
	\end{equation*}
	Now, consider the vector $\hat{v}$ defined as $\hat{v}_r = \sqrt{x_r^2 + y_r^2}$ for all $r$.  By construction, we have $(\tilde{v}_k)^* \tilde{v}_k = \hat{v}_k^2$ holds for all $k$, so it will suffice to show that 
	\begin{equation*}
		\begin{aligned}
			&\Re \left[\sum_{k \neq \ell} \tilde{v}_k^* \tilde{v}_{\ell} \alpha_k m_{k \ell} \right] \leq \Re \left[\sum_{k \neq \ell}^n \hat{v}_k \hat{v}_\ell \alpha_k m_{k \ell}\right] 
		\end{aligned}
	\end{equation*}
	holds.  This follows immediately once we establish that the inequality $\Re((\tilde{v}_k)^* \tilde{v}_\ell) \leq  \hat{v}_k \hat{v}_\ell$ holds for all choices of $k$ and $\ell$.  This can be verified by direct computation of the corresponding inequality for the squares of these values:
	\begin{equation*}
		\begin{aligned}
			\left[\Re(\tilde{v}_k^* \tilde{v}_\ell) \right]^2 &= x_k^2 x_\ell^2 + y_k^2 y_\ell^2 + 2 x_k x_\ell y_k y_\ell \\
			&\leq x_k^2 x_\ell^2 + y_k^2 y_\ell^2 + x_k^2 y_\ell^2 + x_\ell^2 y_k^2\\
			&= \left(\left(x_k^2 + y_k^2\right)^{\frac{1}{2}} \left(x_\ell^2 + y_\ell^2\right)^{\frac{1}{2}} \right)^2\\
			&= \hat{v}_k^2 \hat{v}_\ell^2
		\end{aligned}
	\end{equation*}
	where the inequality follows from noting that $$y_k^2 x_\ell^2 + x_k^2 y_\ell^2 -2 x_k x_\ell y_k y_\ell = (x_k y_l - x_\ell y_k)^2 \geq 0.$$
	It remains to show that
	\begin{equation*}
		\begin{aligned}
		\begin{aligned}
				&\sup_{\vec{v} \in \realnonnegative^n}\Re \left[ \frac{(\vec{v})^T \left( \diag(\vec{\alpha})M - \diag(\vec{\gamma})\right)\vec{v}}{(\vec{v})^T v} \right] \\
				&\leq \sup_{\vec{v} \in \realnonnegative^n}\Re\left[ \frac{(\vec{v})^T \left(M - \diag(\vec{\gamma})\right)\vec{v}}{(\vec{v})^T \vec{v}} \right]
				\end{aligned}
		\end{aligned}
	\end{equation*}
	holds.  We prove this by fixing any $\vec{v} \in \realnonnegative^n$, and noting that 
	\begin{equation*}
		\begin{aligned}
		&\frac{\sum_{k \neq \ell} v_k v_\ell \alpha_k m_{k \ell} - \sum_{k} v_k^2 \gamma_k}{(\vec{v})^T \vec{v}} \\
		&\leq \frac{\sum_{k \neq \ell} v_k v_\ell m_{k \ell} - \sum_{k} v_k^2 \gamma_k}{(\vec{v})^T \vec{v}}
		\end{aligned}
	\end{equation*}
	follows immediately as a consequence of $\alpha_k \in [0,1]$ for all $k$ and $m_{k \ell} \geq 0$ for all $k$ and $\ell$.
\end{proof}

\end{document}